\documentclass[11pt]{amsart}
\usepackage{geometry}
\geometry{letterpaper,left=1in,right=1in,top=1in,bottom=1in}
\usepackage{amsmath, amssymb}
 \usepackage{amsmath,amscd}
\usepackage{amsfonts}
\usepackage{mathrsfs}
\usepackage[arrow,matrix,curve,cmtip,ps]{xy}
\usepackage{setspace}\usepackage[dvipsnames]{xcolor}

\usepackage{pb-diagram} 
\usepackage{pb-xy}
\usepackage{graphicx}

\usepackage{amsthm}

\usepackage{color}

\usepackage{xcolor}

\allowdisplaybreaks

\newtheorem*{rep@theorem}{\rep@title}
\newcommand{\newreptheorem}[2]{%
\newenvironment{rep#1}[1]{%
 \def\rep@title{#2 \ref{##1}}%
 \begin{rep@theorem}}%
 {\end{rep@theorem}}}
\makeatother

 
\newtheorem{proposition}{Proposition}[section]
\newtheorem{theorem}[proposition]{Theorem}

\newtheorem{lemma}[proposition]{Lemma}

\newreptheorem{theorem}{Theorem}
\newreptheorem{proposition}{Proposition}

\newtheorem*{theorem*}{Theorem}
\newtheorem*{proposition*}{Proposition}
\theoremstyle{remark}

\newtheorem{definition}[proposition]{Definition}

\newtheorem{question}[proposition]{Question}

\newcommand{\lk}{\operatorname{lk}}

\newcommand{\M}{\mathcal{M}}
\newcommand{\N}{\mathbb{N}}
\newcommand{\Z}{\mathbb{Z}}

\newcommand{\bdry}{\partial}

\newcommand{\onto}{\twoheadrightarrow}

\newcommand{\GCD}{\operatorname{GCD}}

\newcommand{\rank}{\operatorname{rank}}

\newcommand{\sign}{\operatorname{sign}}

\newcommand{\Sum}{\displaystyle\sum}
\newcommand{\Oplus}{\displaystyle\bigoplus}

\newcommand{\pref}[1]{(\ref{#1})}

\begin{document}

\title{On the indeterminacy of Milnor's triple linking number}

\author{Jonah Amundsen}
\address{Department of Mathematics, University of Wisconsin-Eau Claire, Hibbard Humanities Hall 508,  Eau Claire WI 54702-4004}
\email{amundsjj3573@uwec.edu}

\author{Eric Anderson}
\address{Department of Mathematics, University of Wisconsin-Eau Claire, Hibbard Humanities Hall 508,  Eau Claire WI 54702-4004}
\email{andersew1951@uwec.edu}

\author{Christopher William Davis}
\address{Department of Mathematics, University of Wisconsin-Eau Claire, Hibbard Humanities Hall 508,  Eau Claire WI 54702-4004}
\email{daviscw@uwec.edu}

\date{\today}

\subjclass[2010]{}

\keywords{}

\begin{abstract} 
In the 1950's Milnor defined a family of higher order  invariants generalizing the linking number.  Even the first of these new invariants, the triple linking number, has received and fruitful study since its inception.  In the case that $L$ has vanishing pairwise linking numbers, this triple linking number gives an integer valued invariant.  When the linking numbers fail to vanish, this invariant is only well-defined modulo their greatest common divisor.  In recent work Davis-Nagel-Orson-Powell produce a single invariant called the \emph{total triple linking number} refining the triple linking number and taking values in an abelian group called the \emph{total Milnor quotient}.  They present examples for which this quotient is nontrivial even though none of the individual triple linking numbers are defined.  As a consequence, the total triple linking number carries more information than do the classical triple linking numbers.  The goal of the present paper is to compute this group and show that when $L$ is a link of at least six components it is non-trivial.  Thus, this total triple linking number carries information for every $(n\ge 6)$-component link, even though the classical triple linking numbers often carry no information.
\end{abstract}

\maketitle

\section{Introduction}

In the 1950's Milnor \cite{Milnor} introduced a family of invariants generalizing the classical pairwise linking number.  In this paper we  interest ourselves with first of these invariants, the so-called triple linking number.  These associate to an $n$-component link $L$ and a list of three distinct indices, $1\le i,j,k \le n$, an integer $\mu_{ijk}(L)\in \Z$ measuring how three components $L_i$, $L_j$, and  $L_k$ of $L$ interact.  When the pairwise linking numbers $\lk(L_i,L_j)$, $\lk(L_i,L_k)$,  and $\lk(L_j,L_k)$ all vanish $\mu_{ijk}(L)\in \Z$ is well-defined.  Otherwise it is only well-defined modulo the greatest common divisor (GCD) of these linking numbers.  In particular if $\lk(L_i,L_j) = 1$ then $\mu_{ijk}(L)$ takes values in the trivial group $\Z/1$, and so carries no information about the link.

  In \cite{DNOP}, the third author together with Nagel, Orson, and Powell find that if one  gathers all $n\choose 3$ triple linking numbers into a particular quotient, $\M$, of $\Z^{n\choose 3}$ then one gets a refined invariant, called the \emph{total triple linking number} $\mu(L)\in \M$ which often recovers strictly more information than do the individual triple linking numbers.  This quotient is called the \emph{total Milnor quotient} and depends only on the various pairwise linking numbers.  As a proof that this collection of triple linking numbers carries more information they exhibit a pair of 4-component links $L$ and $L'$ which have pairwise linking numbers equal to $1$, so that none of the classical triple linking numbers carry any information.  They compute that $\M\cong \Z$ and that in $\M$, $\mu(L)\neq \mu(L')$.  See \cite[Example 5.9]{DNOP}.  Thus, even when the individual triple linking numbers carry no information, it is possible that $\mu(L)$ does.  Moreover, they show that when $n\ge 9$, $\M$ is an abelian group with positive rank regardless of the pairwise linking numbers \cite[Remark 5.10]{DNOP}.  Thus, while there exist links of arbitrarily many components for which none of the triple linking numbers are defined, the total triple linking number always carries information, provided the number of components is at least $9$.  Our main result, Theorem~\ref{thm: nontrivial M}, lowers this threshold from $9$ components to $6$.

Before we state this theorem we need some notation.  The linking matrix for an $n$-component link $L$ is the $n\times n$ matrix, denoted $\Lambda$, with zeros and the main diagonal and $(i,j)$-entry $\Lambda_{ij}=\lk(L_i,L_j)$.  Any symmetric matrix with zeros on the main diagonal can be realized as a linking matrix and so we will call such a matrix a \emph{linking matrix} without reference to any particular link.  The total Milnor quotient $\M$ defined in \cite{DNOP} depends only on the linking matrix.  We write $\M(\Lambda)$ when we need to emphasize the dependence of $\M$ on $\Lambda$.  We recall the precise definition of the total Milnor quotient and the total triple linking number in Section \ref{sect:background}.  

\begin{theorem}\label{thm: nontrivial M}
Let $\Lambda$ be an $n\times n$ linking matrix with $n\ge 6$.  The resulting total Milnor quotient $\M(\Lambda)$ is non-trivial.  
\end{theorem}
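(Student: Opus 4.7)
The case $n \geq 9$ is already established in \cite[Remark 5.10]{DNOP}, and the case $n = 4$ with all pairwise linking numbers equal to one is the motivating example of \cite[Example 5.9]{DNOP}; the new content of Theorem~\ref{thm: nontrivial M} is therefore to cover \emph{every} linking matrix once $n$ reaches $6$. My plan is to organize the argument in two steps: first establish the theorem for $n=6$ by exhibiting, for every $6 \times 6$ linking matrix $\Lambda$, an explicit nontrivial quotient of $\M(\Lambda)$; then reduce $n \in \{7, 8\}$ to the $n = 6$ case by a restriction-to-a-sublink argument.

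For the base case $n=6$, I would write $\M(\Lambda) = \Z^{\binom{6}{3}}/R(\Lambda) = \Z^{20}/R(\Lambda)$, using the presentation recalled in Section~\ref{sect:background}, and construct a homomorphism $\phi_\Lambda : \Z^{20} \to A(\Lambda)$ to some nontrivial abelian group which annihilates every generator of $R(\Lambda)$ while taking a nonzero value on some triple basis vector. Guided by the $n = 4$ computation where $\M \cong \Z$, the natural candidate for $\phi_\Lambda$ is an alternating or signed sum over the twenty triples, weighted by explicit polynomial expressions in the entries $\Lambda_{ij}$ chosen so that the weights conspire to kill each Milnor relation. When the presentation of $\M(\Lambda)$ is tractable enough, an alternative is to compute $\M(\Lambda)$ outright via Smith normal form of the presentation matrix built from $\Lambda$ and read off nontriviality from invariant factors; either route should succeed once the structure of $R(\Lambda)$ is fully unpacked.

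For the reduction, given a linking matrix $\Lambda$ of size $n \in \{7, 8\}$, pick a $6$-element subset $S \subset \{1,\ldots,n\}$ and let $\Lambda|_S$ denote the $6 \times 6$ principal submatrix. The projection $\Z^{\binom{n}{3}} \onto \Z^{\binom{6}{3}}$ that kills triples meeting the complement of $S$ should carry $R(\Lambda)$ into $R(\Lambda|_S)$, giving a surjection $\M(\Lambda) \onto \M(\Lambda|_S)$; nontriviality of the target then forces nontriviality of the source. Verifying that the projection really does descend to a map of Milnor quotients is the step requiring the most care, and is where I expect the combinatorics of $R(\Lambda)$ to matter most.

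The main obstacle will be controlling $R(\Lambda)$ uniformly in $\Lambda$: the Milnor relations grow in number and complexity with $n$, and one must verify that $\phi_\Lambda$ kills all of them without relying on features specific to any single $\Lambda$. The hardest instance should be the one in which every off-diagonal entry of $\Lambda$ is $\pm 1$, since then the classical indeterminacy $\gcd(\Lambda_{ij},\Lambda_{ik},\Lambda_{jk})$ equals $1$ for every triple, so no individual triple linking number is defined and nontriviality of $\M(\Lambda)$ must be detected by genuinely global relations among the twenty triples---precisely the phenomenon DNOP identify as novel.
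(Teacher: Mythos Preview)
Your reduction step is correct and is precisely Proposition~\ref{prop:sublink} of the paper: the projection $\Z^{\binom{n}{3}}\to\Z^{\binom{6}{3}}$ killing triples that meet the complement of a chosen six-element subset does carry the relation subgroup into the relation subgroup for the submatrix, so $\M(\Lambda)$ surjects onto $\M(\Lambda|_S)$. In fact this handles every $n\ge 7$ at once, not just $n\in\{7,8\}$; there is no need to invoke the $n\ge 9$ result from \cite{DNOP} separately.

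The genuine gap is the base case $n=6$. Your two suggested routes are both incomplete as stated. The ``explicit homomorphism $\phi_\Lambda$ with polynomial weights in the $\Lambda_{ij}$'' is a hope rather than a construction; you have not written down any such $\phi_\Lambda$, and there is no a priori reason a single polynomial formula should kill all thirty relators $v_{jk}$ uniformly in $\Lambda$. The Smith normal form alternative is worse: there are infinitely many integer $6\times 6$ linking matrices, so you cannot simply loop over them. To make this finite one must first observe that $\M(\Lambda)\otimes\Z/2$ depends only on the entries of $\Lambda$ modulo $2$, reducing the problem to $2^{15}$ matrices --- this is exactly the computational approach the paper carries out in Section~\ref{sect: computation}, but you have not identified the mod-$2$ reduction that makes it work.

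The paper's primary proof takes a different and more uniform route that avoids any case analysis. Rather than construct a map out of $\M$, it bounds $\rank(V)$ from above by exhibiting explicit linear dependencies among the generators $v_{jk}$: for each fixed $j$ one has $\sum_k v_{jk}=0$, and for each fixed $k$ one has $\sum_j \Lambda_{jk}v_{jk}=0$ (Lemma~\ref{lem: dependent V}). A short argument shows these allow one to discard $2n-1$ generators from the spanning set for $V$, yielding $\rank(V)\le n^2-3n+1$ and hence $\rank(\M)\ge \binom{n}{3}-(n^2-3n+1)=\frac{n^3-9n^2+20n-6}{6}$, which is positive for all $n\ge 6$. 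This argument is short, works for every $\Lambda$ simultaneously, and gives a quantitative lower bound rather than mere nontriviality --- advantages your case-by-case strategy would not share even if the base case were completed.
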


In order to prove Theorem~\ref{thm: nontrivial M}, we will demonstrate a lower bound on the rank of $\M$.

\begin{theorem}\label{thm: M rank}
Let $n\ge 6$, $\Lambda$ be an $n\times n$ linking matrix, and $\M(\Lambda)$ be the resulting total Milnor quotient.  Then $\rank(\M(\Lambda))\ge\frac{n^3-9n^2+20n-6}{6}$. 
\end{theorem}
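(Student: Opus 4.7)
The strategy is to use the explicit presentation of $\M(\Lambda)$ recalled in Section~\ref{sect:background}, writing the total Milnor quotient as the cokernel of a ``relation map'' whose image $R_\Lambda \subseteq \Z^{\binom{n}{3}}$ encodes the indeterminacy of the triple linking numbers.  Since
$$\binom{n}{3}-(n^2-3n+1) \;=\; \frac{n^3-9n^2+20n-6}{6},$$
and $\rank \M(\Lambda) = \binom{n}{3}-\rank R_\Lambda$ after tensoring with $\mathbb{Q}$, the theorem reduces to the uniform bound $\rank R_\Lambda \le n^2-3n+1$, valid for every linking matrix $\Lambda$.

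To prove this uniform bound, I would exhibit a fixed $\Z$-submodule $U\subseteq \Z^{\binom{n}{3}}$, defined combinatorially in terms of $n$ alone, such that $R_\Lambda\subseteq U$ for every linking matrix $\Lambda$ and $\rank U\le n^2-3n+1$.  The generators of $R_\Lambda$ are $\Z$-linear in the entries of $\Lambda$, so each such generator can be rewritten as a $\Z$-combination of a finite, $\Lambda$-independent collection of ``universal'' vectors $v_\alpha\in \Z^{\binom{n}{3}}$, each multiplied by an entry of $\Lambda$.  Taking $U$ to be the $\Z$-span of the $v_\alpha$ makes the inclusion $R_\Lambda\subseteq U$ manifest and reduces the problem to a rank computation for $U$ that no longer depends on $\Lambda$.

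The main obstacle is proving $\rank U\le n^2-3n+1$: a naive count of the $v_\alpha$ gives a total on the order of $n^3$, so the heart of the argument will be to exhibit enough linear dependencies among the $v_\alpha$ to cut the rank down to order $n^2$.  I would first work out the case $n=6$ by hand, where $\binom{6}{3}=20$ and $n^2-3n+1=19$ give $\rank \M\ge 1$, confirming Theorem~\ref{thm: nontrivial M} in the boundary case; the dependencies discovered there would then suggest a general pattern, which I would verify either by a direct combinatorial argument or by induction on $n$, splitting off one component at a time and comparing the resulting presentations.  The principal difficulty is that the Milnor indeterminacy relations mix index data in a fairly intricate way, so isolating the correct family of universal identities among the $v_\alpha$—and proving that no further essential identities are present—is the key technical step in the proof.
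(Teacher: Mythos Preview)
Your reduction to the bound $\rank R_\Lambda\le n^2-3n+1$ is correct and is exactly what the paper proves.  However, the strategy you propose for obtaining that bound cannot succeed.  You want a $\Lambda$-independent submodule $U\subseteq\Z^{\binom{n}{3}}$ with $R_\Lambda\subseteq U$ for every linking matrix $\Lambda$ and $\rank U\le n^2-3n+1$.  But any such $U$ must in fact be all of $\Z^{\binom{n}{3}}$.  Concretely, the generator $v_{jk}=\sum_i\Lambda_{ik}X^{ijk}$ is linear in $\Lambda$, so your ``universal'' vectors $v_\alpha$ are precisely the individual $X^{ijk}$; equivalently, if you take $\Lambda$ to be the elementary symmetric matrix with a single nonzero off-diagonal pair $(p,q)$, then $v_{jq}=X^{pjq}\in R_\Lambda$ for every $j\neq p,q$, and varying $(p,q,j)$ produces every basis vector.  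Thus $\bigcup_\Lambda R_\Lambda$ already spans $\Z^{\binom{n}{3}}$, and no $\Lambda$-independent container of rank $<\binom{n}{3}$ exists.

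What the paper does instead is stay with the $n(n-1)$ generators $v_{jk}$ of $R_\Lambda$ and exhibit $2n-1$ linear dependencies among them, forcing $\rank R_\Lambda\le n(n-1)-(2n-1)=n^2-3n+1$.  These come in two families (Lemma~\ref{lem: dependent V}): for each $j$ one has $\sum_k v_{jk}=0$, and for each $k$ one has $\sum_j\Lambda_{jk}v_{jk}=0$.  The second family has $\Lambda$-dependent coefficients, which is precisely why the argument cannot be made uniform in $\Lambda$ in the way you propose.  A short case analysis then checks that $2n-1$ generators can actually be discarded (one must avoid eliminating the same $v_{jk}$ twice, and handle separately the case where some column of $\Lambda$ vanishes off the diagonal).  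So the missing idea in your outline is to look for relations among the $v_{jk}$ themselves rather than for a universal container; once you compute $\sum_k v_{jk}$ and $\sum_j\Lambda_{jk}v_{jk}$ directly, the needed identities fall out of the antisymmetry of $X^{ijk}$ and the symmetry of $\Lambda$.
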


Applying Theorem \ref{thm: M rank} in the case that $n=6$ one sees that if $L$ is a 6-component link, then $\rank(\M)\ge 1$.  Moreover, standard reveal that the lower bound $\frac{n^3-9n^2+20n-6}{6}\ge 1$ when $n\ge 6$, so that $\M$ is nontrivial for all links of at least six components.  
  
  Of course, the nontriviality of $\M$ is only important if it can be used to distinguish links.  Implicit in the work of \cite{DNOP} is the idea that if $\M$ is not the trivial group, then every element of $\M$ is realized by a link.  The following theorem makes that explicit.

\begin{theorem}\label{thm: linking numbers}
Let  $\Lambda$ be a linking matrix and $\M(\Lambda)$ be the resulting total Milnor quotient.  For every element $m \in \M(\Lambda)$ there exists a link, $L$, with linking matrix $\Lambda$ such that $\mu(L) = m$.
\end{theorem}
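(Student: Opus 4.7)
The plan is to start with any link realizing $\Lambda$ and then use local Borromean-type modifications to adjust the total triple linking number to any prescribed value while preserving the pairwise linking numbers.

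First I would construct a seed link $L_0$ with linking matrix $\Lambda$. A concrete choice is: take $n$ split unknotted components, and for each pair $i<j$ add $|\Lambda_{ij}|$ appropriately signed Hopf clasps between $L_i$ and $L_j$. Compute the resulting class $m_0 := \mu(L_0)\in\M(\Lambda)$. Given the target $m\in\M(\Lambda)$, choose any integer lift $(v_{ijk})_{1\le i<j<k\le n}\in\Z^{\binom{n}{3}}$ of $m-m_0$, which is possible because by definition $\M(\Lambda)$ is a quotient of $\Z^{\binom{n}{3}}$.

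The key local move is the \emph{Borromean insertion}. Given a triple $\{i,j,k\}$ and an integer $v$, choose three short arcs, one in each of $L_i,L_j,L_k$, lying in a common $3$-ball disjoint from the rest of the link, and replace them by a ``strength-$v$'' Borromean tangle (obtained by $|v|$ iterated signed Borromean band sums). This operation preserves the linking matrix because every pairwise linking number of the inserted Borromean tangle is zero. A Seifert-surface triple-intersection computation, or equivalently a Massey product calculation, shows that this shift changes an integer representative of $\mu_{ijk}$ by exactly $v$ and leaves every integer representative of $\mu_{abc}$ for $\{a,b,c\}\ne\{i,j,k\}$ unaltered: any triple containing a component outside $\{i,j,k\}$ is unaffected because the insertion is supported on $L_i\cup L_j\cup L_k$, while for $\{i,j,k\}$ itself the new contribution comes from a single Borromean linking. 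Performing the insertion with parameter $v_{ijk}$ for each triple produces a link $L$ with linking matrix $\Lambda$ and $\mu(L)=m_0+(m-m_0)=m$ in $\M(\Lambda)$.

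The main obstacle I expect is verifying the effect of the Borromean insertion against the precise definition of the total triple linking number recalled in Section~\ref{sect:background}. The underlying geometric fact is classical, going back to Milnor's original realization results and reappearing throughout the literature on clasper surgery, but care is needed to ensure that a shift by $v$ in an integer representative of $\mu_{ijk}$ translates into the correct shift in $\M(\Lambda)$, since the individual $\mu_{ijk}$ are only defined modulo the pairwise linking number GCD and the indeterminacy subgroup cut out in \cite{DNOP} is the one that must be quotiented.
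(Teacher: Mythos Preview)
Your proposal is correct and follows essentially the same approach as the paper: start with any link realizing $\Lambda$, then use local Borromean insertions to shift each coordinate $X^{ijk}$ of the total triple linking number by $\pm 1$ while preserving the linking matrix.  The paper's Lemma~\ref{lem:ChangeMilnor} is exactly the verification you flag as the main obstacle, carried out via the Mellor--Melvin clasp-word formula; since the shift occurs at the level of the integer representative $\sum (m_{ijk}(F)-t_{ijk}(F))X^{ijk}$, it passes to $\M(\Lambda)$ without any indeterminacy issue.
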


Combining Theorems \ref{thm: nontrivial M} and \ref{thm: linking numbers}, for any $n\times n$ linking matrix $\Lambda$ with $n\ge 6$ there exist links with linking matrix $\Lambda$ which are distinguished by their total triple linking numbers.   We show that this does not follow when $n\le 5$ by producing a $5\times 5$ linking matrix for which $\M=0$.  
\begin{theorem}\label{thm: trivial M}
Let $\Lambda = \left[\begin{array}{ccccc}0&1&1&0&0\\1&0&1&1&0\\1&1&0&1&1\\0&1&1&0&1\\0&0&1&1&0
\end{array}\right]$.  The resulting total Milnor quotient $\M(\Lambda)$ is trivial.
\end{theorem}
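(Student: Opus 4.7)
My plan is to use the explicit description of the total Milnor quotient $\M(\Lambda)$ recalled in Section~\ref{sect:background}, which presents $\M(\Lambda)$ as a quotient of $\Z^{\binom{n}{3}}$ by a subgroup of indeterminacy relations whose generators are read off explicitly from the entries of $\Lambda$.  Showing $\M(\Lambda)=0$ for the given $5\times 5$ matrix therefore reduces to proving that these generators already span all of $\Z^{10}$.

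First I would fix the standard basis $\{e_{ijk}\}$ of $\Z^{10}$ indexed by the ten $3$-element subsets of $\{1,2,3,4,5\}$, and write out the full list of indeterminacy vectors arising from the nonzero entries of $\Lambda$.  The given matrix has seven nonzero off-diagonal entries above the diagonal, namely $\Lambda_{12},\Lambda_{13},\Lambda_{23},\Lambda_{24},\Lambda_{34},\Lambda_{35},\Lambda_{45}$, each equal to $1$; each such entry contributes a family of indeterminacy vectors.  The heuristic I would exploit is that a $\Lambda_{ij}=1$ is especially effective at killing (linear combinations of) basis vectors $e_{ijk}$ for any third index $k$, and in particular should let us express many of the $e_{ijk}$ directly as indeterminacy generators.

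Next I would assemble these indeterminacy vectors as rows of an integer matrix $R$ with ten columns and verify that its Smith normal form is the $10\times 10$ identity.  Rather than performing a single opaque reduction, I would do this triple-by-triple, exhibiting for each of the ten basis vectors $e_{ijk}$ an explicit $\Z$-linear combination of indeterminacy generators equal to it.  A convenient order of attack is to begin with the triples containing the index~$3$, since row~$3$ of $\Lambda$ is a row of ones off the diagonal and thus covers six of the ten triples in one stroke; the remaining four triples $\{1,2,4\},\{1,2,5\},\{1,4,5\},\{2,4,5\}$ each still contain a pair $\{i,j\}$ with $\Lambda_{ij}=1$, so once the ``row~$3$'' relations have been used to simplify, the remaining triples can be dispatched one at a time using the relations associated to $\Lambda_{12}$, $\Lambda_{24}$, and $\Lambda_{45}$.

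The main obstacle is bookkeeping and, crucially, keeping track of integrality: one must verify that the indeterminacy subgroup equals all of $\Z^{10}$ rather than merely a finite-index subgroup, since $\M(\Lambda)$ could a priori still have torsion even after its rank is shown to vanish.  This is why I would insist on writing down each $e_{ijk}$ as an honest integer combination of indeterminacy generators, rather than just computing ranks over $\Q$.  Once that list of ten expressions is displayed, the conclusion $\M(\Lambda)=0$ is immediate from the definition of the quotient.
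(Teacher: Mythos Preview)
Your proposal is correct and follows essentially the same route as the paper: both argue by explicitly expressing each of the ten basis vectors $X^{ijk}$ as an integer linear combination of the defining relators $v_{jk}$, thereby showing that the relators span all of $\Z^{10}$ and not merely a finite-index subgroup. The paper organizes the computation by selecting ten specific $v_{jk}$'s in an order that makes the system triangular (each new relation introduces exactly one new $X^{ijk}$), whereas you propose to start with the triples containing the index~$3$; but the strategy, and in particular the insistence on honest integer combinations rather than a rational rank count, is the same.
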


One important application of triple linking numbers and their refinement $\mu(L)\in \M$ is to the study of surface systems bounded by a link.  A surface system for a link $L$ is a collection of Seifert surfaces for the various components of $L$ which intersect transversely.  See Figure~\ref{fig: surface system} for some examples.  Interestingly, while any two knots admit homeomorphic surfaces, two links can fail to admit homeomorphic surface systems.  Indeed according to \cite{DNOP} pairwise linking numbers, together with total triple linking number form a complete set of obstructions to links bounding homeomorphic surface systems.

\begin{figure}
\begin{picture}(270,100)
\put(0,10){\includegraphics[height=.2\textwidth]{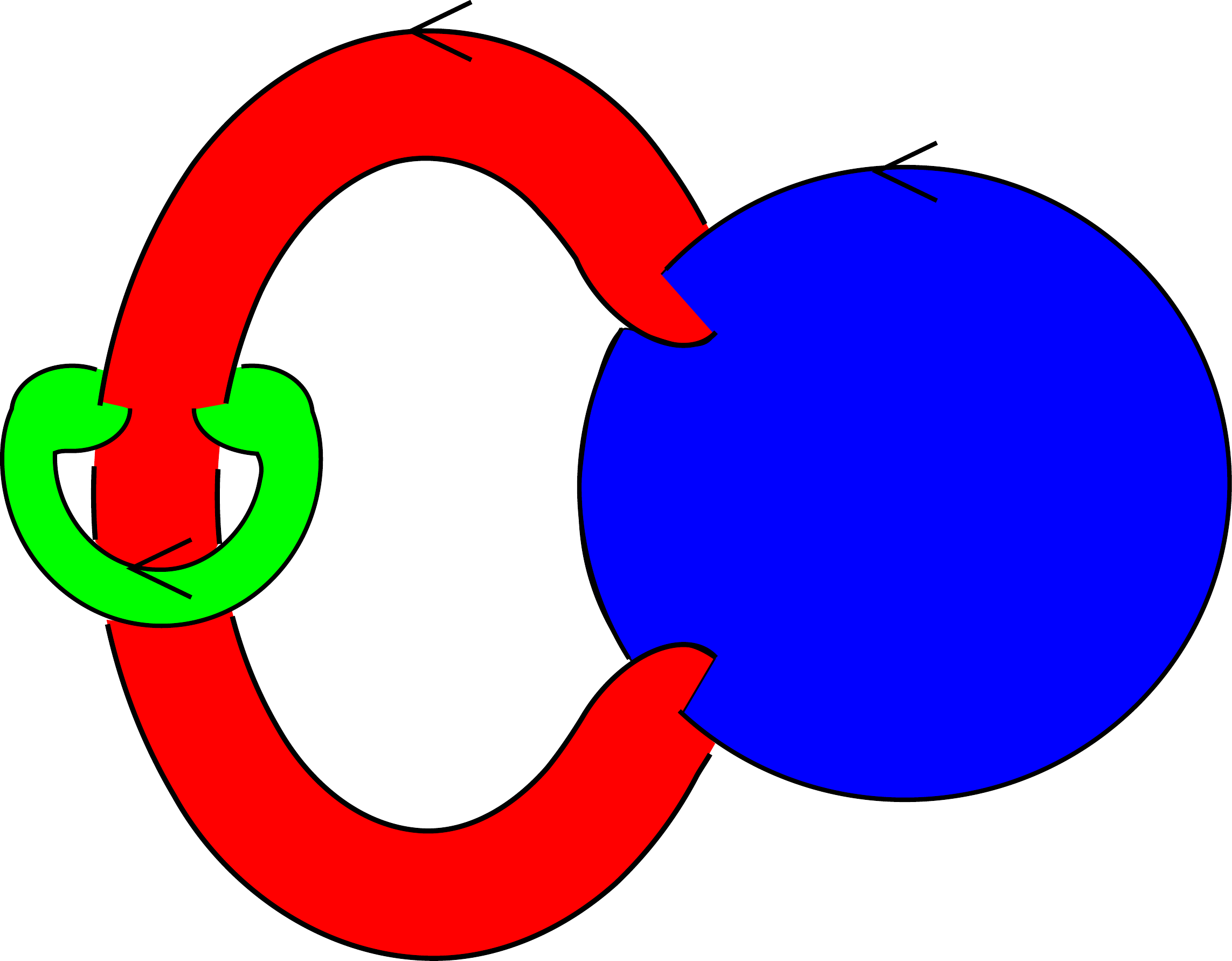}}
\put(140,10){\includegraphics[height=.2\textwidth]{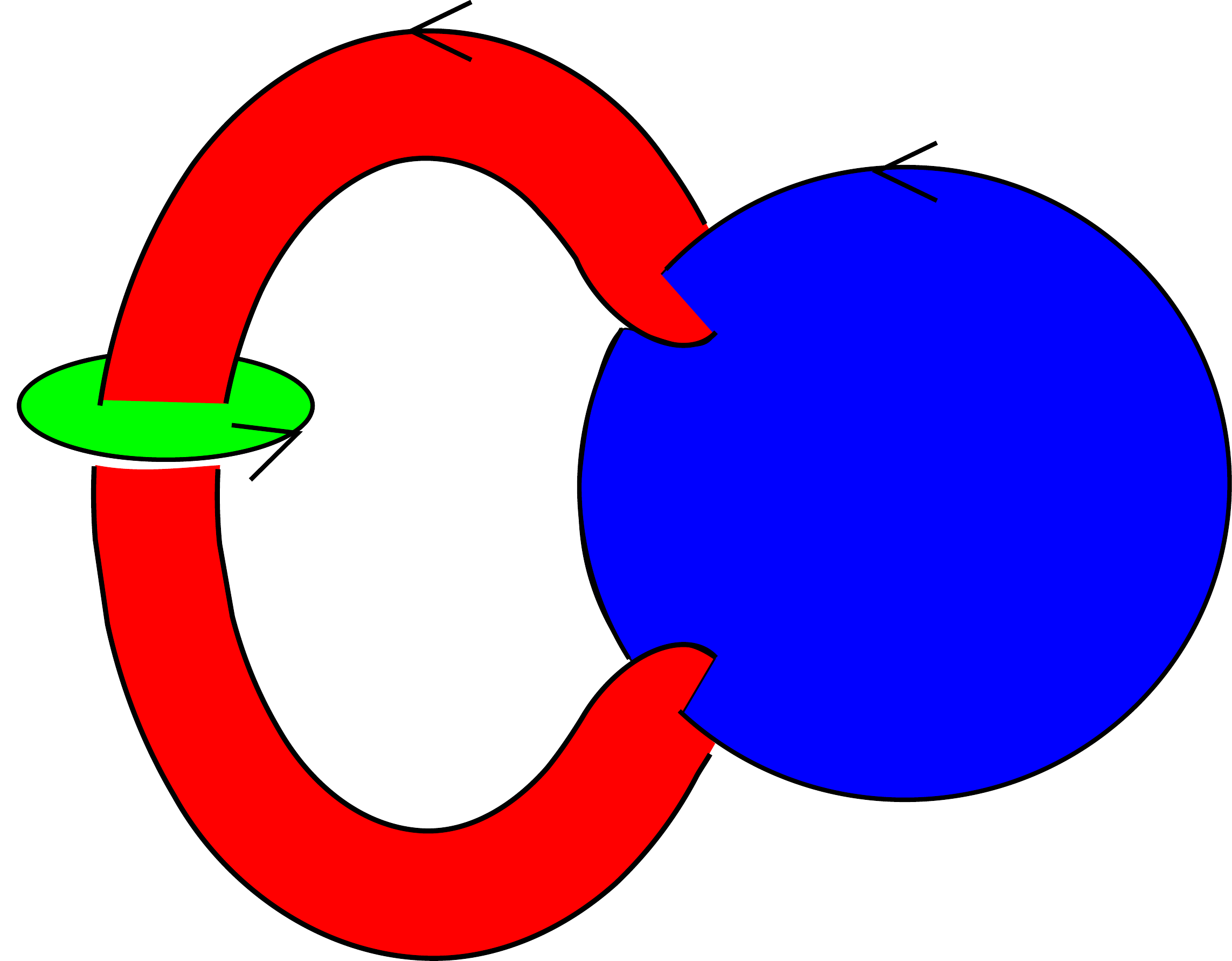}}
\end{picture}
\caption{A pair of surface systems for the Boromean rings. }
\label{fig: surface system}
\end{figure}

\begin{theorem}[Theorem 1.1 of \cite{DNOP}]\label{thm:DNOP}
Let $L$ and $L'$ be links with linking matrix $\Lambda$.  Then $L$ and $L'$ admit homeomorphic Surface systems if and only if $\mu(L)=\mu(L')$ in $\M(\Lambda)$.  
\end{theorem}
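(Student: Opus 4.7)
This theorem is attributed to \cite{DNOP}, so the plan is to outline how one would approach such a classification result from scratch, proceeding separately in each direction of the if-and-only-if.

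\textbf{Forward direction.} To show that homeomorphic surface systems force $\mu(L) = \mu(L')$, the plan is to give a purely geometric formula for $\mu(L)$ in terms of a surface system $\Sigma = (\Sigma_1, \ldots, \Sigma_n)$: for each triple $i < j < k$, the integer $\mu_{ijk}$ should be computable as a signed count of triple points $\Sigma_i \cap \Sigma_j \cap \Sigma_k$, corrected if necessary by contributions from the double curves $\Sigma_i \cap \Sigma_j$, etc., so that this lift to $\Z^{n \choose 3}$ represents $\mu(L) \in \M(\Lambda)$. Because such a formula sees only the intersection data of the abstract nested surface complex $\Sigma$, any homeomorphism $\Sigma \to \Sigma'$ that carries boundary to boundary matches triple points signedly with triple points, and double curves with double curves, so the two counts agree and hence $\mu(L) = \mu(L')$ in $\M(\Lambda)$.

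\textbf{Reverse direction.} This is the content-rich direction. The plan is to classify surface systems with a fixed linking matrix $\Lambda$ up to homeomorphism rel boundary, and to identify the resulting set of equivalence classes with $\M(\Lambda)$. Concretely, starting from arbitrary surface systems $\Sigma$ for $L$ and $\Sigma'$ for $L'$, one first enumerates a collection of local modifications --- genus stabilizations, tube moves along double curves, and finger moves introducing canceling pairs of triple points --- that suffice to connect any two surface systems with the same linking matrix. Next, one tracks how each modification changes the vector of signed triple-intersection counts in $\Z^{n \choose 3}$, and shows that the subgroup generated by all such changes coincides precisely with the subgroup that defines $\M(\Lambda)$ as a quotient of $\Z^{n \choose 3}$. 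Assuming $\mu(L) = \mu(L')$ in $\M(\Lambda)$, one then performs a finite sequence of these moves on $\Sigma'$ until its triple-intersection and double-curve data match those of $\Sigma$ combinatorially, at which point a surface-by-surface homeomorphism can be assembled by the classification of surfaces with boundary.

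\textbf{Main obstacle.} The hard part is showing that the algebraic quotient $\M(\Lambda)$ matches the geometric indeterminacy of surface systems exactly. Two things must be verified: first, that the listed local moves are sufficient to relate any two surface systems with the same linking matrix (so that no additional equivalence classes are missed), and second, that every such move alters the triple-point vector only by an element of the prescribed subgroup (so that no extra relations are imposed beyond those defining $\M(\Lambda)$). This two-sided matching is the substantive content of the theorem, and a direct proof requires careful bookkeeping of how triple points are created, annihilated, and redistributed under each surface modification, keeping track of signs via orientations of the double curves.
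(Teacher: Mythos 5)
This paper does not prove the statement at all: it is imported verbatim as Theorem 1.1 of \cite{DNOP} (see also the discussion of \cite[Corollary 1.4]{DNOP} in Section~\ref{sect:background}), so there is no internal proof to compare your outline against. Judged on its own, your forward direction matches the sketch the paper does give in Section~\ref{sect:background}: by the Mellor--Melvin formula, each $\mu_{ijk}$ is computed from the clasp-words $w_k(F)$ together with the signed count of triple points, and a homeomorphism of surface systems (with corresponding basepoints) preserves exactly this data, so $\mu(L)=\mu(L')$ in $\M(\Lambda)$. One small correction: the extra terms $m_{ijk}(F)$ are read off from intersections of the link components $L_k$ with the surfaces $F_i$, $F_j$ (the clasp-words), not from the double curves $F_i\cap F_j$ themselves, though the two are of course related.

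The reverse direction, however, is a plan rather than a proof, and the gap you label as the ``main obstacle'' is in fact the entire content of the theorem. You assert without argument (i) that your list of local moves (stabilizations, tube moves, finger moves) suffices to connect any two surface systems with the same linking matrix, and (ii) that the changes these moves induce on the triple-point vector in $\Z^{n\choose 3}$ generate precisely the subspace $V(\Lambda)$ defining $\M(\Lambda)$; neither claim is reduced to anything checkable in your outline, and (i) in particular is a substantial geometric theorem, not bookkeeping. There is also a glossed step at the end: even after arranging that the triple-point and double-curve data of $\Sigma$ and $\Sigma'$ agree combinatorially, assembling surface-by-surface homeomorphisms into a homeomorphism of the systems that respects the whole intersection stratification (double curves and triple points, with orientations, as required by the definition of homeomorphic surface systems) needs an argument beyond the classification of compact surfaces with boundary. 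As it stands, your proposal establishes only the easy direction and defers the hard one to exactly the statement being proved; for the actual argument one must consult \cite{DNOP}.
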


Combining Theorems \ref{thm: trivial M} and \ref{thm:DNOP} we see that if $L$ and $L'$ are 5-component links with linking matrix appearing in Theorem \ref{thm: trivial M} then $\M=0$ and so $\mu(L) = \mu(L')$  as they both live in the trivial group.   Thus, they admit  homeomorphic surface systems.  Two such links appear in Figure \ref{fig: trivial M}.  

\begin{figure}[t]
\begin{picture}(200,60)
\put(0,44){\includegraphics[angle=90, width=.2\textwidth]{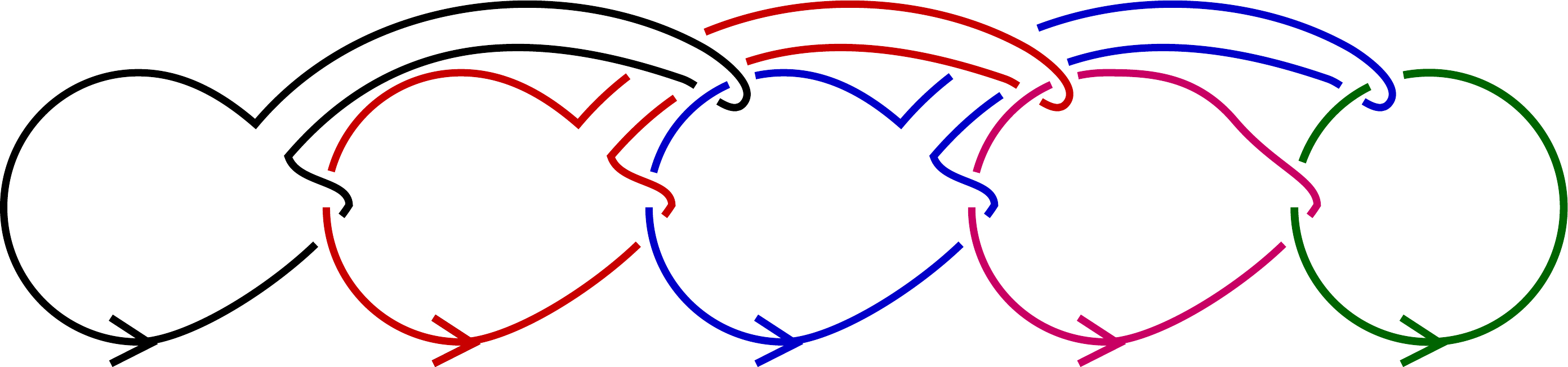}}
\put(110,0){\includegraphics[angle=90, width=.2\textwidth]{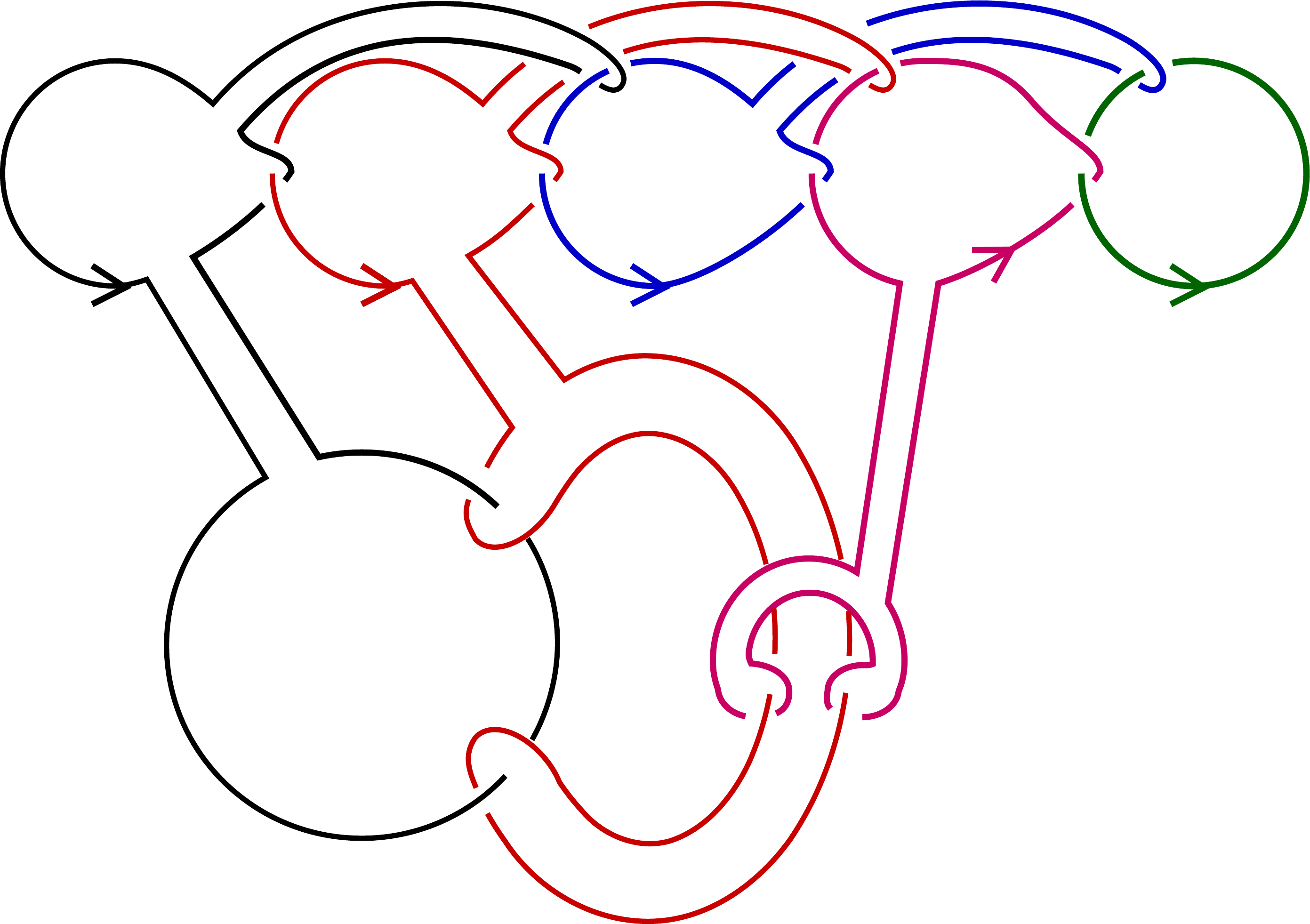}}
\end{picture}
\caption{It is a surprising a consequence of Theorems \ref{thm: trivial M} and \ref{thm:DNOP} that these two links admit homeomorphic surface systems.  }
\label{fig: trivial M}
\end{figure}

Conversely, we see that when $n\ge 6$, for any fixed $n\times n$ linking matrix $\Lambda$, $\M(\Lambda)$ is an infinite group by Theorem \ref{thm: M rank}.  By Theorems \ref{thm: trivial M} and \ref{thm:DNOP} there exist infinitely many links with linking matrix $\Lambda$ but no pair of which admit homeomorphic surface systems.   

There is an alternative approach to the non-triviality of $\M$.  In Section~\ref{sect: computation} we explain how to present $\M\otimes \Z/2$ in terms of the pairwise linking numbers mod 2.    As there are only finitely many $n\times n$ symmetric matrices with entries in $\Z/2$ and zeros on the main diagonal, one can list them one at a time and compute the rank (over $\Z/2$) of $\M\otimes \Z/2$ in each of these cases.  Such a computation reveals the following theorem. 

\begin{theorem}\label{thm: computer}
\begin{enumerate}
\item
Amongst all $2^{6}$ of the $4\times 4$ linking matrices containing only $0$'s and $1$'s, the following table summarizes the $(\Z/2)$-ranks of the resulting Total Milnor quotients:

$$
\begin{array}{|c|c|c|c|c|c|c|c|c|c|c|c|}
\hline
\rank_{\Z/2}(\M\otimes \Z/2)&0&1&2&3&4
\\\hline
\text{number of occurrences}&36&21&6&0&1
\\\hline
\text{portion of total}&.56&.33&.09&0&.02
\\\hline
\end{array}
$$
\item
Amongst all $2^{10}$ of the $5\times 5$ linking matrices containing only $0$'s and $1$'s, the following table summarizes the $(\Z/2)$-ranks of the resulting Total Milnor quotients:
$$
\begin{array}{|c|c|c|c|c|c|c|c|c|c|c|c|}
\hline
\rank_{\Z/2}(\M\otimes \Z/2)&0&1&2&3&4&5&6&7&8&9&10
\\\hline
\text{occurrences}&132&450&180&165&46&40&0&10&0&0&1
\\\hline
\text{portion of total}& .13& .44& .18& .16& .04& .04& 0& .01&0& 0& $.001$

\\\hline
\end{array}
$$
\item \label{item: n=6}
Amongst all $2^{15}$ of the $6\times 6$ linking matrices containing only $0$'s and $1$'s, the following table summarizes the ranks of the resulting Total Milnor quotients:
$$
\begin{array}{|c|c|c|c|c|c|c|c|c|c|c|c|}
\hline
\rank_{\Z/2}(\M\otimes \Z/2)&0&1&2&3&4&5&6
\\\hline
\text{occurrences}&0&0&5712&7920&8595&4035&1627
\\\hline
\text{portion of total}&0&0& .17& .24& .26& .12& .05
\\\hline
\hline
\rank_{\Z/2}(\M\otimes \Z/2)&7&8&9&10&11&12&13
\\\hline
\text{occurrences}&3030&855&240&538&75&45&80
\\\hline
\text{portion of total}& .09& .03& .007& 0.02&.002& .001& .002
\\\hline
\hline
\rank_{\Z/2}(\M\otimes \Z/2)&14&15&16&17&18&19&20
\\\hline
\text{occurrences}&0&0&15&0&0&0&1
\\\hline
\text{portion of total}&0&0& .0005&0&0&0& 3\times 10^{-5}
\\\hline
\end{array}
$$
\end{enumerate}

\end{theorem}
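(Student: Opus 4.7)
The plan is to reduce Theorem~\ref{thm: computer} to a finite mechanical enumeration, leaning on the presentation described in Section~\ref{sect: computation}. That section expresses $\M(\Lambda) \otimes \Z/2$ as the cokernel of an explicit matrix over the field $\mathbb{F}_2$ whose entries are polynomial in the mod-$2$ reductions of the entries $\Lambda_{ij}$. Consequently $\rank_{\Z/2}\bigl(\M(\Lambda) \otimes \Z/2\bigr)$ equals $\binom{n}{3}$ (the number of generators, one for each unordered triple) minus the $\mathbb{F}_2$-rank of this presentation matrix, which is a routine Gaussian elimination.

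With the presentation in hand, I would enumerate all $n \times n$ symmetric $\{0,1\}$-matrices with zeros on the diagonal by iterating over the $\binom{n}{2}$ strictly upper-triangular entries. This gives $2^{6}=64$ cases for $n=4$, $2^{10}=1024$ for $n=5$, and $2^{15}=32768$ for $n=6$. For each matrix $\Lambda$ I would build the presentation matrix for $\M(\Lambda)\otimes\Z/2$, compute its $\mathbb{F}_2$-rank, subtract from $\binom{n}{3}$ to obtain $r(\Lambda) = \rank_{\Z/2}\bigl(\M(\Lambda)\otimes\Z/2\bigr)$, and increment a counter indexed by $r(\Lambda)$. Tabulating these counters against $r$ produces exactly the three tables in the statement. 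Since each individual rank computation is on a matrix of size at most $\binom{6}{3}\times O(n^2) = 20 \times O(36)$, the full enumeration completes in seconds.

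The main obstacle is not mathematical but verificational: one must transcribe the presentation from Section~\ref{sect: computation} without error and confirm that the output tables actually record what the statement claims. I would perform the following sanity checks. First, the occurrence counts in each table should sum to $2^{\binom{n}{2}}$, i.e.\ $64$, $1024$, and $32768$ respectively. Second, specific matrices with known output should be verified independently: the matrix of Theorem~\ref{thm: trivial M} must register $r=0$ in the $n=5$ table; any $n \ge 6$ matrix must, by Theorem~\ref{thm: M rank}, register $r \ge 1$, consistent with the $r=0$ column for $n=6$ being empty; and the all-zeros matrix (for which $\M$ is known to be free of rank $\binom{n}{3}$) must land in the rightmost column. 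Passing these checks, together with running the enumeration in a second independent implementation, provides the assurance needed to regard the tables as a proof.
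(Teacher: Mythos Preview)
Your proposal is correct and follows essentially the same approach as the paper: enumerate all $2^{\binom{n}{2}}$ symmetric $\{0,1\}$-matrices with zero diagonal, build the presentation matrix for $\M(\Lambda)\otimes\Z/2$, compute its rank, and tabulate. The only cosmetic difference is that the paper computes the Smith normal form of the integral presentation and then counts even diagonal entries, whereas you reduce mod~$2$ first and do Gaussian elimination over $\mathbb{F}_2$; these yield the same number, and your variant is if anything the more direct one.
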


In particular, conclusion \pref{item: n=6} of Theorem~\ref{thm: computer} implies that $\M\otimes \Z/2$, and so $\M$, is nonzero for every $6\times 6$ linking matrix.  In order to see that this implies Theorem~\ref{thm: nontrivial M} we need the result proposition indicating that $\M$ grows when new components are added to a link.

\begin{proposition}\label{prop:sublink}
Let $L'$ be a sublink of $L$.  Let $\Lambda$ and $\Lambda'$ be their linking matrices.  Let $\M(\Lambda')$ and $\M(\Lambda)$ be the resulting Milnor quotients.  There is a surjection $\M(\Lambda)\onto \M(\Lambda')$.
\end{proposition}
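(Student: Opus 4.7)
The plan is to construct the surjection explicitly on the level of generators of the free presentations of $\M(\Lambda)$ and $\M(\Lambda')$ and to check that it descends to the quotients.  By iterating, it suffices to handle the case where $L'$ is obtained from $L$ by deleting a single component, which after relabeling we may take to be $L_n$.  Then $\Lambda'$ is the $(n-1)\times(n-1)$ principal submatrix of $\Lambda$ obtained by deleting the last row and column.

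The total Milnor quotient is defined (in Section~\ref{sect:background}) as a quotient of the free abelian group $\Z^{\binom{n}{3}}$ on generators $e_{ijk}$ indexed by unordered triples of distinct indices in $\{1,\ldots,n\}$, modulo relations whose coefficients are read off from the entries of $\Lambda$.  My first step is to write down the naive map on free groups
$$
\varphi : \Z^{\binom{n}{3}} \longrightarrow \Z^{\binom{n-1}{3}}
$$
defined on generators by $\varphi(e_{ijk}) = e_{ijk}$ when $n\notin\{i,j,k\}$ and $\varphi(e_{ijk})=0$ when $n\in\{i,j,k\}$.  This map is manifestly surjective on the free groups, so the real content is to verify that $\varphi$ carries defining relations of $\M(\Lambda)$ into defining relations of $\M(\Lambda')$.

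I expect the relations for $\M(\Lambda)$ to be organized by pairs of indices $(i,j)$, with the $(i,j)$-relation being a linear combination of the generators $e_{ijk}$ as $k$ varies over the remaining indices, with coefficients determined by entries of $\Lambda$.  Any relation indexed by a pair containing $n$ will involve only generators $e_{ijk}$ with $n\in\{i,j,k\}$ and so will map to zero under $\varphi$.  A relation indexed by a pair $\{i,j\}\subseteq\{1,\ldots,n-1\}$ should split as a sum over $k\in\{1,\ldots,n\}\setminus\{i,j\}$; the terms with $k\le n-1$ would then reproduce the corresponding defining relation for $\M(\Lambda')$, while the single term with $k=n$ is killed by $\varphi$.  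Here one uses that the relevant coefficients for the $\{i,j\}$-relation depend only on the entries $\Lambda_{ab}$ with $a,b\in\{1,\ldots,n-1\}$, which is automatic because $\Lambda'$ is a principal submatrix of $\Lambda$.  With this verification, $\varphi$ descends to the claimed surjection $\M(\Lambda)\onto \M(\Lambda')$.

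The main obstacle is not conceptual but organizational: one must extract from the definition in Section~\ref{sect:background} the precise form of the defining relations of $\M(\Lambda)$ and confirm that those indexed by a pair $\{i,j\}\subseteq\{1,\ldots,n-1\}$ really do split in the way described, with the $k=n$ term isolated from the rest.  Once this bookkeeping is complete no further input is required, and induction on the number of deleted components finishes the general statement.
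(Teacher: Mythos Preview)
Your approach is essentially identical to the paper's: reduce to deleting the last component, define the obvious projection $\varphi$ on the free abelian groups by killing generators involving the index $n$, and verify that each relation $v_{jk}$ maps into the relation subgroup for $\Lambda'$ by splitting into the cases $n\in\{j,k\}$ (image zero) and $n\notin\{j,k\}$ (the $i=n$ summand dies and the rest is exactly $v_{jk}^{\Lambda'}$). The only difference is tone---your write-up is phrased as a plan (``I expect'', ``should split'') rather than as a completed verification, but once you plug in the explicit formula $v_{jk}=\sum_i \Lambda_{ik}X^{ijk}$ from Definition~\ref{defn:M} the bookkeeping you anticipate goes through exactly as you describe.
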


The appearance of zeros on the far left of the table in conclusion \pref{item: n=6} of Theorem~\ref{thm: computer} imply Theorem~\ref{thm: nontrivial M}.  The astute reader will notice that these tables also contain some surprising zeros on the far right.  It would be interesting to see if this pattern is indicative of anything for links of arbitrarily many components.  We ask the following question:

\begin{question}
Consider any $n\in \N$.  Are there any $n$-components links for which $\M\otimes \Z/2$ has $\Z/2$-rank equal to ${n \choose 3}-1$?  
More generally, what abelian groups can by realized as total Milnor quotients of $n$-component links?
\end{question}

%

\subsection{Organization of the paper}

In Section~\ref{sect:background} we recall the precise definition of the total Milnor quotient and total triple linking number.  In passing we prove Proposition \ref{prop:sublink}.  In Section \ref{sect:realizability} we explain how the value of the total triple linking number changes under the operation of tying in a copy of the Boromean rings, in doing so we prove Theorem~\ref{thm: linking numbers}.  In Section~\ref{sect: nontrivial} we find lower bounds on the rank of the total Milnor quotient, proving Theorem \ref{thm: M rank} and as a consequence Theorem~\ref{thm: nontrivial M}.  In Section~\ref{sect: trivial} we construct a $5\times 5$ linking matrix for which $\M$ is trivial, proving Theorem~\ref{thm: trivial M}.  In Section~\ref{sect: computation} we explain the computations needed to prove Theorem~\ref{thm: computer} and explain why it also implies Theorem~\ref{thm: nontrivial M}.

\section{Background: surface systems, triple linking numbers, and the total Milnor quotient}\label{sect:background}

 In this section we provide some background.  We begin by recalling precisely what we mean by a  surface system for a link.  Next we quickly state the formulation of the triple linking number of a link in terms of a surface system due to Mellor-Melvin \cite{Mellor-Melvin}, which generalizes some of the ideas of \cite{Cochran90}.  Finally we give the definition of the total Milnor quotient $\M$ and total triple linking number $\mu(L)\in \M$ appearing in \cite{DNOP}.

For a link $L = L_1\cup\dots\cup L_n$, a \emph{surface system} $F = F_1\cup\dots\cup F_n$ for $L$ is a union of embedded surfaces with $\bdry F_i = L_i$, which are allowed to intersect transversely.  A pair of surface systems $F = F_1\cup\dots\cup F_n$ and $F' = F_1'\cup\dots\cup F_n'$ are called \emph{homeomorphic} if there exist a homeomorphism $\Phi:F\to F'$ which restricts to an orientation preserving homeomorphism $\Phi|_{F_i}:F_i\to F'_i$ and preserves orientations of the various intersection submanifolds.


We explain how to compute Milnor's triple linking number using a technique due to Mellor-Melvin \cite{Mellor-Melvin}.  Let $F = F_1\cup\dots \cup F_n$ be a surface system for the link $L$.  For each component, $F_k$, pick a basepoint $p_k\in L_k= \bdry F_k$.  Follow $L_k$ starting at $p_k$ and record the intersection with the components of $F$ as a word.  More precisely, whenever $L_k$ intersects $F_j$ positively record the letter $x_j$, when $L_k$ intersects $F_j$ negatively record $x_j^{-1}$.  In \cite{Davis-Roth}, This is called a \emph{clasp-word}, and is denoted $w_k(F)$. To each $i,j,k$ we associate a number $\epsilon_{ijk}(F)$ counting how many times an $x_i$ occurs before an $x_j$ in $w_k(F)$ as in the following definition.  

\begin{definition}[Definition 5.1 of \cite{DNOP}]\label{defn}
Let $w = x_{t_1}^{\epsilon_1} x_{t_2}^{\epsilon_2}\dots x_{t_m}^{\epsilon_m}$ be a word in $x_1, x_1^{-1} \dots, x_n, x_n^{-1}$.  An $(x_ix_j)$-decomposition of $w$ consists of a pair $(p,q)$, with $1\le p<q\le m$, $t_p = i$, and $t_q=j$.  The sign of this decomposition is $\sign_w(p,q) = \epsilon_p\cdot\epsilon_q$.  We define $\epsilon_{ijk}(F)$ by 
$$\epsilon_{ijk}(F) = \Sum_{p,q} \sign_{w_k(F)}(p,q)$$
where the sum is taken over all $(x_ix_j)$-decompositions of $w_k(F)$.  
\end{definition}

Set $m_{ijk}(F)= \epsilon_{ijk}(F)+\epsilon_{jki}(F)+\epsilon_{kij}(F)$ and $t_{ijk}(F)$ to be the signed count of the points in $F_i\cap F_j\cap F_k$.  Mellor-Melvin  \cite{Mellor-Melvin} prove that the difference recovers Milnor's triple linking number. That is, $\mu_{ijk}(L)\in \Z/\GCD(\lk(L_i,L_j), \lk(L_i,L_k), \lk(L_j,L_i))$ is equal to the class of $m_{ijk}(F)-t_{ijk}(F)$.   
 See \cite[Section 2]{Mellor-Melvin},  for an example computing the triple linking number using this perspective.  When the linking numbers $\lk(L_i,L_j)$, $\lk(L_i,L_k)$, and $\lk(L_j,L_i)$ do not all vanish, the value of $m_{ijk}(F)-t_{ijk}(F)$ depends on the choice of basepoints $p_i, p_j, p_k$ and on the choice of surface system.  In this case, $\mu_{ijk}(L)$ is only well-defined modulo the GCD of the linking numbers.  

Suppose $L$ and $L'$ admit homeomorphic surface systems $F\cong F'$.  Recall that the linking number $\lk(L_i, L_j)$ can be computed by counting the intersections between $L_i$ and $F_j$.  Since a homeomorphism of surface systems will preserve these intersection points, it follows that $\lk(L_i,L_j) = \lk(L_i',L_j')$.  In the case of 2-component links, \cite[Theorem 1]{Davis-Roth} gives the converese, two 2-component links admit homeomorphic surface systems if and only if they have the same linking numbers.

Similarly for the triple linking number, if you make a choice of basepoints on $F$ and $F'$ which are related by this homeomorphism, then you will find that the resulting clasp-words are identical.  Similarly the number of triple intersection points will be preserved.  As Mellor-Melvin's formula for $\mu_{ijk}$ involves only these claspswords and triple points, $\mu_{ijk}(L) = \mu_{ijk}(L')$, Thus, if two links admit homeomorphic surface systems, then their triple linking numbers will agree.  
In \cite[Theorem 2]{Davis-Roth} Roth and the third author prove that two links with vanishing pairwise linking numbers admit homeomorphic surface systems if and only if they have the same triple linking numbers.  

In \cite{DNOP} Nagel, Orson, Powell, and the third author gather together all $n\choose3$ triple linking numbers into a single element of $\Z^{n\choose3}$, called the {total triple linking number}.  They define a quotient $\M$ of $\Z^{n\choose3}$ where the class of this element is a well defined invariant of $L$.

\begin{definition}[Definition 5.6 of \cite{DNOP}]\label{defn:M}
Let $\Lambda$ be an $n\times n$ linking matrix.  Let $\{X^{ijk}~:~1\le i<j<k\le n\}$ be a basis for the alternating tensor $\Z^n\wedge\Z^n\wedge\Z^n\cong \Z^{n\choose3}$.  Let $V$ be the subspace of $\Z^{n\choose3}$ generated by $\{v_{jk}:1\le j\neq k\le n\}$ where $v_{jk} = \Sum_{i=1}^n \Lambda_{ik}X^{ijk}$.  The \emph{total Milnor quotient} associated with $\Lambda$ is the quotient group $\M:=\Z^{n\choose3}/ V$ .  
\end{definition}

For completeness we state some properties of the basis elements $X^{ijk}$ of the alternating tensor.   If any two of $i,j,k$ are the same then $X^{ijk}=0$.  For any $i,j,k$, $X^{ijk} = X^{jki} = X^{kij} =  -X^{ikj} = -X^{jik} = -X^{kji}$.  That is, the basis element $X^{ijk}$ is preserved under an even permutation of the indices and changed by a sign by an odd permutation.   In general, $v_{jk}$ and $v_{kj}$ are not even linearly dependent.  

A momentary reflection reveals that $\M$ is presented by $n\choose 3$ generators and $2\cdot {n\choose 2}$ relators.  When $n=9$, ${n\choose 3} =84$ while $ 2\cdot {n\choose 2} = 72$, so that by the rank-nullity theorem, $\rank(\M)\ge 12$ and $\M$ is an infinite abelian group.  See also \cite[Remark 5.10]{DNOP}.  

In \cite{DNOP} the \emph{total triple linking number} is defined by fixing a surface system $F$ for $L$, fixing a choice of basepoints, and considering all $n\choose 3$  triple linking numbers at once as an element of $\M$.  That is, 
\begin{equation}\label{total triple linking}\mu(L):=\Sum_{1\le i<j<k\le n}\left(m_{ijk}(F)-t_{ijk}(F)\right)X^{ijk}\in \M.\end{equation}
  In \cite[Corollary 1.4]{DNOP}, they not only show that this is an invariant of $L$, but that it determines whether two links admit homeomorphic surface system.   Any two links $L$ and $L'$ admit homeomorphic surface system if and only if the have the same linking matrix $\Lambda$ (so that they have the same total Milnor quotient $\M(\Lambda)$) and $\mu(L)=\mu(L')$ in $\M(\Lambda)$.  We now have all of the background needed.  We close this section with the proof of Proposition~\ref{prop:sublink}.

\begin{repproposition}{prop:sublink}
Let $L'$ be a sublink of $L$.  Let $\Lambda$ and $\Lambda'$ be their linking matrices.  Let $\M(\Lambda')$ and $\M(\Lambda)$ be the resulting Milnor quotients.  There is a surjection $\M(\Lambda)\onto \M(\Lambda')$.
\end{repproposition}
\begin{proof}

Let $L$ be a link with linking matrix $\Lambda$.  Without loss of generality, we assume that $L'$ is the $(n-1)$-component link given by deleting the $n$'th component of $L$.  Then $\Lambda'$ is the result of deleting the last row and column from $\Lambda$.  Since this proposition makes reference to two different linking matrices and two different total Milnor quotients, we will use $v^{\Lambda}_{jk}$ and $v^{\Lambda'}_{jk}$ to denote the generators of the subspaces $V(\Lambda)\subseteq \Z^{n\choose3}$ and $V(\Lambda')\subseteq \Z^{{n-1}\choose3}$ given in Definition~\ref{defn:M}.  The resulting total Milnor quotients are given by $\M(\Lambda) = { \Z^{n\choose3}}/{V(\Lambda)}$ and $\M(\Lambda') = { \Z^{{n-1}\choose3}}/{V(\Lambda')}$.

Consider the epimorphism $\Psi:  \Z^{{n}\choose3} \onto  \Z^{{n-1}\choose3}$ sending $\Psi(X^{ijk}) = X^{ijk}$ if $i$, $j$, and $k$ are all less than $n$ and sending $\Psi(X^{ijk}) = 0$ otherwise.  In order to show that $\Psi$ passes to a well-defined epimorphism from $\M(\Lambda)$ to $\M(\Lambda')$, it suffices to check that each generator $v_{jk}^{\Lambda}$ of $V(\Lambda)$ is sent to an element of the span of $V(\Lambda')$.  This follows from a quick inspection.  First suppose that one of $j$ and $k$ is equal to $n$ so that $\Psi(X^{ijk})=0$ for all $i$.  Then
$$
\Psi(v_{jk}^\Lambda) = \Sum_k\Lambda_{ij} \Psi(X^{ijk}) = 0
$$
which is certainly in the subspace $V(\Lambda')$.  Next suppose that each of $j$ and $k$ is less than $n$.  As $\Psi(X^{ijk}) = X^{ijk}$ when $i<n$ and $ \Psi(X^{ijk})=0$ when $i=n$, 
$$
\Psi(v_{jk}^\Lambda) = \Sum_{i=1}^n\Lambda_{ij} \Psi(X^{ijk}) = \Sum_{i=1}^{n-1}\Lambda_{ij} X^{ijk}.
$$
  Finally, as $\Lambda'$ is a submatrix of $\Lambda$, $\Lambda_{ij} =\Lambda_{ij}'$.  Thus,
$$
\Psi(v_{jk}^\Lambda) =  \Sum_{i=1}^{n-1}\Lambda'_{ij} X^{ijk}.
$$
This is precisely the definition of $v_{jk}^{\Lambda'}\in V(\Lambda')$.  Linearity now implies that $\Psi[V(\Lambda)]\subseteq V(\Lambda')$ so that $\Psi$ induces a well defined surjection $\M(\Lambda)\onto \M(\Lambda')$, completing the proof.
\end{proof}

\section{Realizability of triple linking numbers}\label{sect:realizability}

Let $\Lambda$ be any linking matrix and $\M(\Lambda)$ be the resulting total Milnor quotient.  In order to show that any element $m \in \M(\Lambda)$ is realized by some link with linking matrix $\Lambda$ we will show that the coefficient $m_{ijk}$ in $\mu(L) = \Sum_{ijk}m_{ijk} X^{ijk}$  may be incerased or decreased by $1$ by banding an appropriate copy of the Borromean Rings as in Figure~\ref{fig:changeMilnor}.

\begin{figure}[h]
\begin{picture}(340,70)
\put(0,10){\includegraphics[width=6em]{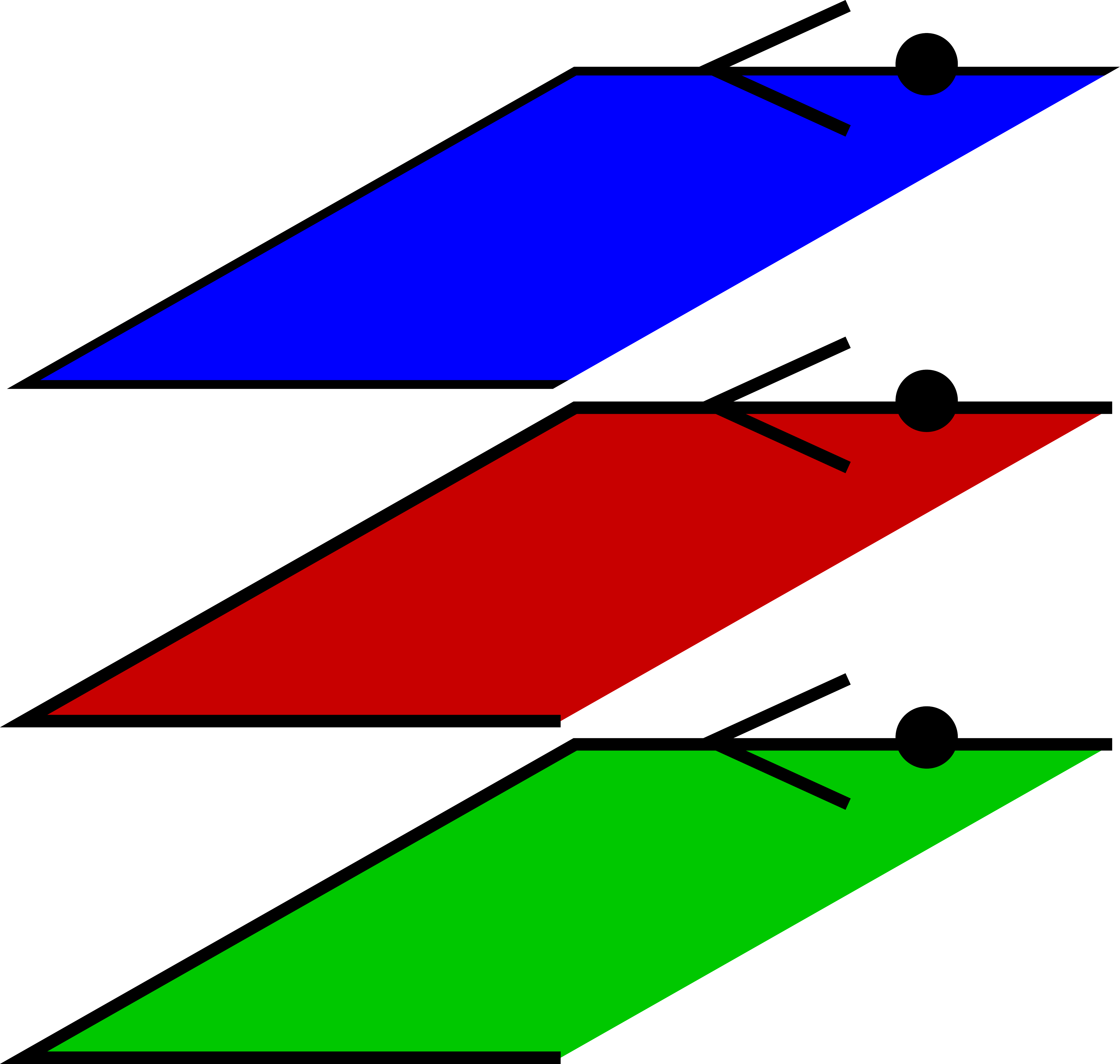}}
\put(60,58){\textcolor{blue}{\small{$F_i$}}}
\put(60,38){\textcolor{red}{\small{$F_j$}}}
\put(60,18){\textcolor{OliveGreen}{\small{$F_k$}}}
\put(80,0){\includegraphics[width=12em]{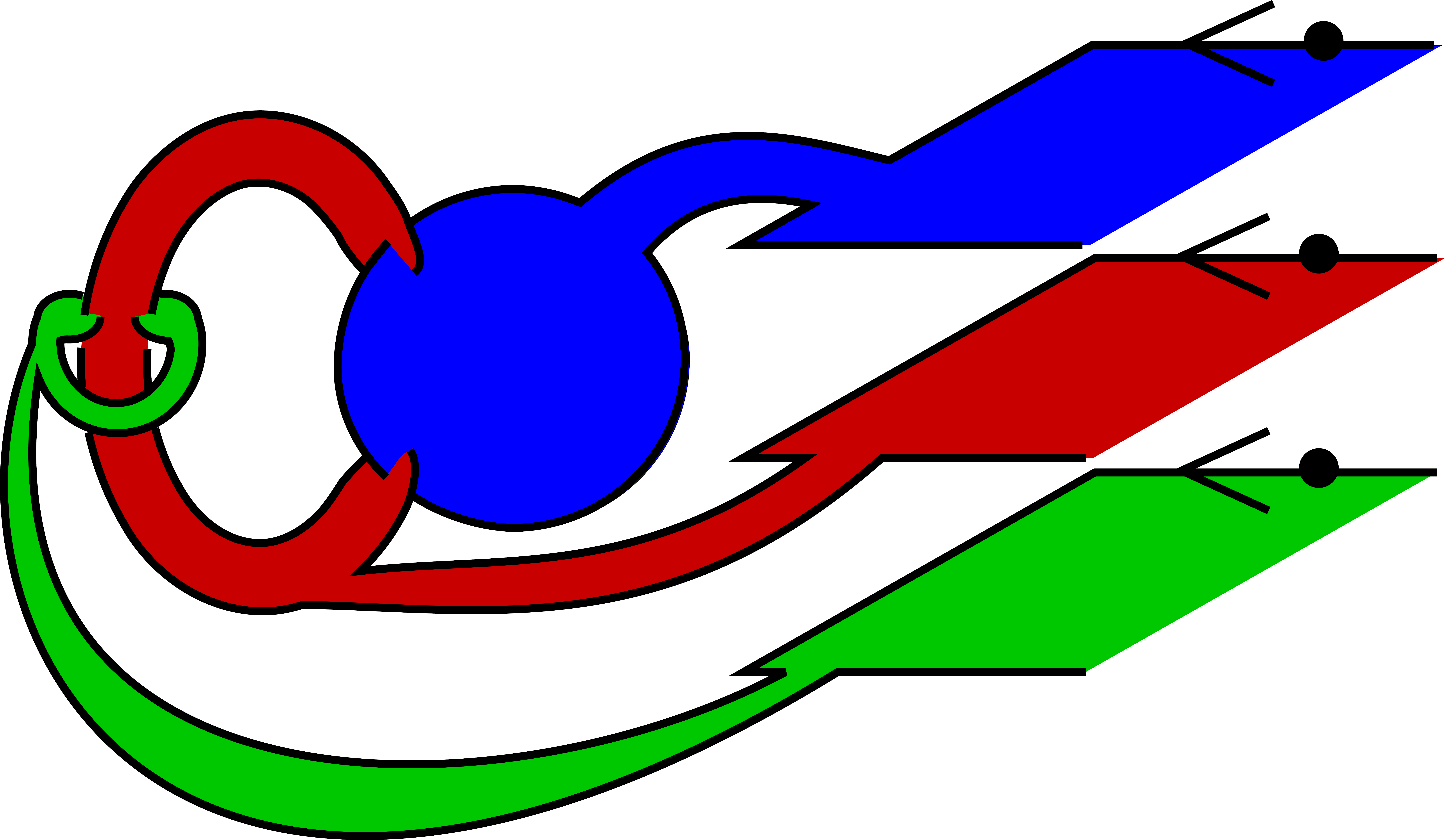}}
\put(200,60){\textcolor{blue}{\small{$F_i^+$}}}
\put(82, 51){\tiny{$-$}}
\put(96, 51){\tiny{$+$}}
\put(115, 60){\tiny{$+$}}
\put(105, 35){\tiny{$-$}}
\put(200,40){\textcolor{red}{\small{$F_j^+$}}}
\put(200,20){\textcolor{OliveGreen}{\small{$F_k^+$}}}
\put(225,0){\includegraphics[width=12em]{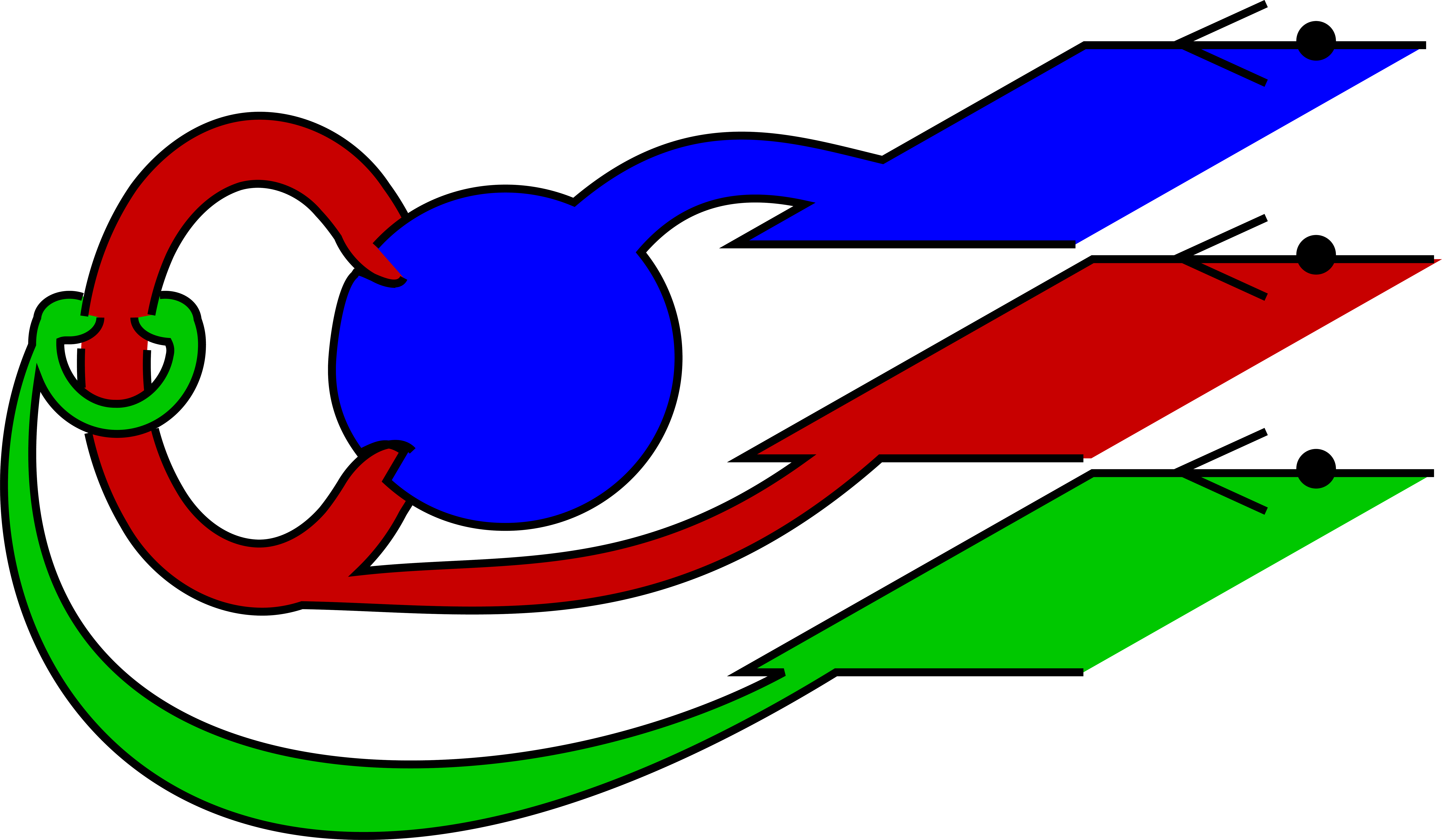}}
\put(227, 51){\tiny{$-$}}
\put(241, 51){\tiny{$+$}}
\put(260, 60){\tiny{$-$}}
\put(250, 35){\tiny{$+$}}
\put(345,60){\textcolor{blue}{\small{$F_i^-$}}}
\put(345,40){\textcolor{red}{\small{$F_j^-$}}}
\put(345,20){\textcolor{OliveGreen}{\small{$F_k^-$}}}
\end{picture}
\caption{Left:  A local picture of the components $F_i$, $F_j$, and $F_k$ of a surface system $F$ for a link $L$.  Center and right:  surface systems $F^+$ and $F^-$ for new links $L^+$ and $L^-$.  The $\pm$ next to each clasp indicates the sign the intersection points at either end.   
}
\label{fig:changeMilnor}
\end{figure}

\begin{lemma}\label{lem:ChangeMilnor}
Let $L$ be any $n$-component link with linking matrix $\Lambda$.  Fix any $i,j,k$ with $1\le i<j<k\le n$.  Then the links $L^+$ and $L^-$ of Figure \ref{fig:changeMilnor} also have linking matrix $\Lambda$ and in the total Milnor quotient $\M(\Lambda)$, $\mu(L^+) = \mu(L)+X^{ijk}$ and $\mu(L^-) = \mu(L)-X^{ijk}$.
\end{lemma}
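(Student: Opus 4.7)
The plan is to verify both claims directly from the Mellor-Melvin formula, exploiting the fact that the banding operation inserts a local copy of the Borromean tangle, whose pairwise linking numbers all vanish.

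For the linking matrix claim I will simply inspect Figure \ref{fig:changeMilnor}: each of the three pairs among $\{L_i,L_j,L_k\}$ acquires exactly two new signed intersections of opposite sign, so every pairwise linking number is preserved; components outside $\{i,j,k\}$ are clearly unaffected. Hence $L^+$ and $L^-$ have linking matrix $\Lambda$.

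For the change in $\mu$, I fix a surface system $F$ for $L$ and extend it to a surface system $F^{\pm}$ for $L^{\pm}$ that agrees with $F$ outside a ball $B^3$ containing the banding region and, inside $B^3$, restricts to the standard Borromean-rings surface system (whose triple linking number is $\pm1$). Choosing basepoints outside $B^3$, the clasp-word $w_c(F^{\pm})$ is obtained from $w_c(F)$ by inserting a single block $\nu_c$ at the location where $L_c$ enters $B^3$; here $\nu_c$ is empty when $c\notin\{i,j,k\}$ and equals the Borromean clasp-word on the $c$-th strand otherwise. The key observation is that the cross terms in the obvious splitting of $\epsilon_{abc}(F^{\pm})$ cancel: a cross term in which the $x_a$-letter lies in $\nu_c$ and the $x_b$-letter lies in the old portion of $w_c$ factors as
\[
\Bigl(\sum_{x_a\in\nu_c}\epsilon_p\Bigr)\cdot\Bigl(\sum_{x_b\in\text{old portion}}\epsilon_q\Bigr),
\]
and the first factor is the algebraic intersection of the $c$-th Borromean strand with the $a$-th Borromean surface, which vanishes since the Borromean rings have trivial linking matrix. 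The symmetric cross term vanishes similarly. Consequently $\epsilon_{abc}(F^{\pm})=\epsilon_{abc}(F)+\epsilon_{abc}^B$ and $t_{abc}(F^{\pm})=t_{abc}(F)+t_{abc}^B$, where both Borromean contributions vanish unless $\{a,b,c\}=\{i,j,k\}$ (since $\nu_c$ only contains letters $x_i,x_j,x_k$, and the new surfaces intersect each other only inside $B^3$). Summing over triples in the definition of $\mu$, the difference $\mu(L^{\pm})-\mu(L)$ equals the total triple linking number of the inserted Borromean tangle, namely $\pm X^{ijk}$.

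The main obstacle is a pure sign-bookkeeping check: I must confirm from the explicitly labeled clasp signs in Figure \ref{fig:changeMilnor} that the inserted tangle realizes $\mu_{ijk}=+1$ in the $L^+$ picture and $-1$ in the $L^-$ picture. Reading off the six-letter clasp-word on each of the three new strands, together with the single triple intersection point, from the signs shown in the figure, this reduces to a direct calculation using Definition \ref{defn} and the formula $m_{ijk}(F^B)-t_{ijk}(F^B)=\pm1$ for the Borromean surface system.
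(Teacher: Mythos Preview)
Your argument is correct and takes a genuinely different route from the paper's.  The paper chooses basepoints just before the banding region, writes down the explicit new clasp-words (for instance $w_j^+=x_i^{-1}x_kx_ix_k^{-1}w_j$), and then computes directly that $\epsilon_{kij}(F^+)=\epsilon_{kij}(F)+1$ while all other $\epsilon$'s and all $t$'s are unchanged.  Your approach instead proves an additivity principle: inserting a local tangle with vanishing pairwise linking numbers changes $\mu$ by exactly the total triple linking number of the inserted tangle.  The cross-term factorisation (cases $x_a\in\nu_c$, $x_b$ in the old word, and vice versa) is the key step, and your observation that the relevant factor is an algebraic intersection number of the Borromean strand with a Borromean surface---hence zero---is exactly right.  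This buys you a cleaner conceptual statement that would apply to inserting any algebraically split tangle, at the cost of still having to compute $\mu_{ijk}$ for the specific Borromean surface system in the figure.

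One small correction for your final sign check: in the surface system actually drawn in Figure~\ref{fig:changeMilnor} there are \emph{no} triple points, and the inserted blocks are $\nu_i=x_jx_j^{-1}$, $\nu_j=x_i^{-1}x_kx_ix_k^{-1}$, $\nu_k=x_jx_j^{-1}$ (so $2+4+2$ letters, not six-letter words on each strand).  With $t^B_{ijk}=0$, the entire $\pm1$ comes from $m^B_{ijk}=\epsilon^B_{ijk}+\epsilon^B_{jki}+\epsilon^B_{kij}$; only $\epsilon^B_{kij}$ (coming from the unique $(x_kx_i)$-decomposition inside $\nu_j$) is nonzero, and its sign is what distinguishes the two pictures.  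Once you adjust your description of the local data accordingly, your verification goes through.
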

\begin{proof}
Let $F$ be a surface system for $L$ with base points $p_1,\dots, p_n$.  Let $w_m$ be the clasp-word given by following $L_m = \bdry F_m$.  We shall analyze how these clasp-words change when one replaces $F$ by either of the new surface systems $F^+$ or $F^-$ of Figure~\ref{fig:changeMilnor}.  We choose base points on $L^{+}=\bdry F^+$ and $L^-=\bdry F^-$ as in Figure~\ref{fig:changeMilnor}.  Let $w_m^+$ and $w_m^-$ denote the clasp-words given by following $L_m^+$ and $L_m^-$ respectively. Notice that for $m\notin\{ i,j,k\}$ no new clasps have been added to $\bdry F_m$ and so $w_m^-= w_m^+= w_m$.  Reading off the remaining clasp-words, 
$$
\begin{array}{ccc}
w_i^+ = x_jx_j^{-1} w_i,&
w_j^+ = x_i^{-1} x_k x_i x_k^{-1} w_i,&
w_k^+ = x_jx_j^{-1}w_k, 
\\
w_i^- = x_j^{-1}x_j w_i,&
w_j^- = x_i x_k x_i^{-1} x_k^{-1} w_i,&
w_k^- = x_jx_j^{-1}w_k .
\end{array}$$
We explain how $\epsilon_{kij}(F^+)$ differs from $\epsilon_{kij}(F)$.  The subword $ x_i^{-1} x_k x_i x_k^{-1}$ of $w_j^+$ includes one $(x_k, x_i)$-decomposition, namely $(2,3)$, whose  sign is $+1$.  Otherwise, $w_j^+$ contains all of the $(x_k, x_i)$-decompositions of $w_j$ and two new $(x_k,x_i)$-decompositions with opposite signs for every instance of $x_i$ in $w_j$.  Therefore $\epsilon_{kij}(F^+) = \epsilon_{kij}(F)+1$.  Similar analysis gives
$$
\begin{array}{ccc}
\epsilon_{ijk}(F^+) = \epsilon_{ijk}(F),&
\epsilon_{jki}(F^+) = \epsilon_{jki}(F),&
\epsilon_{kij}(F^+) = \epsilon_{kij}(F)+1,
\\
\epsilon_{ijk}(F^-) = \epsilon_{ijk}(F),&
\epsilon_{jki}(F^-) = \epsilon_{jki}(F),&
\epsilon_{kij}(F^-) = \epsilon_{kij}(F)-1.
\end{array}$$
Recall that  $m_{ijk}(F) = \epsilon_{ijk}(F) + \epsilon_{kij}(F) + \epsilon_{jki}(F)$.  Hence, 
$$m_{ijk}(F^+) = \epsilon_{ijk}(F^+) + \epsilon_{jki}(F^+) + \epsilon_{kij}(F^+) =  \epsilon_{ijk}(F) + \epsilon_{jki}(F) + (\epsilon_{kij}(F)+1) = m_{ijk}(F) + 1.$$
Similarly, $m_{ijk}(F^-)=m_{ijk}(F)-1$.  For any other $p<q<r$, $m_{pqr}(F^+) = m_{pqr}(F^-) = m_{pqr}(F)$.  The modification of Figure~\ref{fig:changeMilnor} introduces no new triple points, so that $t_{pqr}(F^+) = t_{pqr}(F^-) = t_{pqr}(F)$ for all $p<q<r$.  By equation \pref{total triple linking}, 
$$\mu(L^+) = \Sum (m_{pqr}(F^+)-t_{pqr}(F^+))X^{pqr}  = \Sum (m_{pqr}(F)-t_{pqr}(F))X^{pqr}  + X^{ijk}=\mu(L)  + X^{ijk},$$
where the sums are over all $1\le p<q<r\le n$.  Similarly
$\mu(L^-) = \mu(L)  - X^{ijk}$,  proving the result.  
\end{proof}

We are now ready to prove Theorem~\ref{thm: linking numbers}, which says every element of the total Milnor quotient is realized by some link.

\begin{reptheorem}{thm: linking numbers}
Let  $\Lambda$ be a linking matrix and $\M(\Lambda)$ be the resulting total Milnor quotient.  For every element $m \in \M(\Lambda)$ there exists a link, $L$, with linking matrix $\Lambda$ such that $\mu(L) = m$.
\end{reptheorem}

\begin{proof}[Proof of Theorem~\ref{thm: linking numbers}]

Let $\Lambda$ be an $n\times n$ linking matrix.  Let $\M$ be the resulting total Milnor quotient.  Consider any $m = \left[\Sum_{ijk}m_{ijk} X^{ijk}\right]\in \M$.  Consider now any $n$-component link $L$ with linking matrix $\Lambda$.  If $\mu(L)=m$ then we are already done.  Otherwise, $\mu(L) = \left[\Sum_{ijk}\mu_{ijk} X^{ijk}\right]\in \M$ and $\mu_{ijk}\neq m_{ijk}$ for some $1\le i<j<k\le n$.   If  $\mu_{ijk}<  m_{ijk}$, then we replace $L$ by $L^+$ as in Lemma \ref{lem:ChangeMilnor}.  The difference between $\mu_{ijk}$ and $m_{ijk}$ reduces by 1.  If $\mu_{ijk}> m_{ijk}$ then we instead use $L^-$.    Iterating, we replace $L$ by a new link with $\mu_{ijk} = m_{ijk}$.  Notice that this replacement leaves every other $\mu_{pqr}$ unchanged.  We repeat this procedure for every $i<j<k$ until we arrive at a final link $L$ with linking matrix $\Lambda$ and for which $\mu(L)=m$.  This completes the proof.
\end{proof}

\section{On the non-triviality of the total Milnor quotient}\label{sect: nontrivial}

Fix an $n\times n$ linking matrix $\Lambda$.  In this section we prove that $\M$ is nontrivial whenever $n$ is at least 6.  Recall that $\M$ is defined to be the quotient of $\Z^n\wedge \Z^n \wedge \Z^n = \Z^{n \choose 3}$ by the subspace $V$ spanned by $\{v_{jk}~:~1\le j\neq k\le n\}$ where $v_{jk} = \Sum_{i=1}^n \Lambda_{ik}X^{ijk}$.  By the rank-nullity theorem from linear algebra we see that 
$\rank(\M) = {n \choose 3} - \rank(V).$
In order to Prove Theorem~\ref{thm: nontrivial M}, we derive an upper bound on the rank of $V$ by finding linear dependencies amongst the $v_{jk}$, as in the following lemma.  

\begin{lemma}\label{lem: dependent V}
The subset $\{v_{jk}~:~1\le j\neq k\le n\}\subseteq \Z^{n \choose 3}$ admits the following linear dependencies:
\begin{enumerate}
\item \label{dependence:FixJ}For any fixed $1\le j\le n$, $\Sum_{k=1}^n v_{jk} = 0$
\item \label{dependence:FixK}For any fixed $1\le k\le n$, $\Sum_{j=1}^n \Lambda_{jk}v_{jk} = 0$
\end{enumerate}
\end{lemma}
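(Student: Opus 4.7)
The plan is to verify each linear dependency by direct expansion and then to exploit the alternating property of the basis elements $X^{ijk}$ together with the symmetry of the linking matrix $\Lambda$. Throughout, recall that $X^{ijk}=0$ whenever two of the indices coincide and that $X^{ijk}$ picks up the sign of the permutation under any rearrangement of its superscripts.

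For dependency (\ref{dependence:FixJ}), I would fix $j$ and substitute the definition of $v_{jk}$ to obtain the double sum
$$S = \Sum_{k=1}^n v_{jk} = \Sum_{i,k} \Lambda_{ik}\, X^{ijk}.$$
Relabeling the dummy indices $i \leftrightarrow k$ rewrites $S$ as $\sum_{i,k} \Lambda_{ki}\, X^{kji}$. Invoking the symmetry $\Lambda_{ki}=\Lambda_{ik}$ of the linking matrix and the antisymmetry $X^{kji} = -X^{ijk}$ (the transposition of the first and third slots is a single, hence odd, permutation) then exhibits $S$ as its own negative. Since $\Z^{n \choose 3}$ is torsion-free, this forces $S=0$.

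Dependency (\ref{dependence:FixK}) is handled in the same spirit. Fixing $k$ and expanding gives
$$T = \Sum_{j=1}^n \Lambda_{jk}\, v_{jk} = \Sum_{i,j} \Lambda_{ik}\Lambda_{jk}\, X^{ijk}.$$
Here the coefficient $\Lambda_{ik}\Lambda_{jk}$ is manifestly symmetric in $i$ and $j$ by commutativity of integer multiplication, while $X^{ijk}$ is antisymmetric in its first two slots. The substitution $i \leftrightarrow j$ once again yields $T=-T$, so $T=0$.

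There is essentially no obstacle; the only care needed is in bookkeeping the sign of the transposition invoked in each case (first--third for part (\ref{dependence:FixJ}) and first--second for part (\ref{dependence:FixK})), together with noting that the symmetry of $\Lambda$ is genuinely used only in the first dependency, whereas the second uses only commutativity of multiplication in $\Z$. Terms with coinciding indices contribute zero automatically and therefore do not affect the argument.
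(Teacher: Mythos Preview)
Your proof is correct and follows essentially the same approach as the paper: both expand the double sum and exploit the antisymmetry of $X^{ijk}$ together with the symmetry of $\Lambda$ (for part~(\ref{dependence:FixJ})) or of the product $\Lambda_{ik}\Lambda_{jk}$ (for part~(\ref{dependence:FixK})) to obtain cancellation. The paper carries this out by explicitly splitting the sum into the ranges $i<k$ and $i>k$ (respectively $i<j$ and $i>j$) and pairing terms, whereas you package the same cancellation as $S=-S$ in a torsion-free group; these are the same argument.
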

\begin{proof}
The proof is a straightforward computation.  Indeed, fix any $1\le j\le n$.  Then
$$
\Sum_{k=1}^n v_{jk} = \Sum_{i=1}^n \Sum_{k=1}^n \Lambda_{ik}X^{ijk} = \Sum_{i,k} \Lambda_{ik}X^{ijk}.
$$
As $X^{iji}=0$, we drop the terms $i=k$ from the sum.  Thus, we may break this sum up into two sub-sums depending on whether $i<k$ or $k<i$. 
$$
\Sum_{k=1}^n v_{jk} =  \Sum_{i<k} \Lambda_{ik}X^{ijk} +  \Sum_{k<i} \Lambda_{ik}X^{ijk}.
$$
We reindex the latter sum by interchanging the roles of $i$ and $k$, 
$$
\Sum_{k=1}^n v_{jk} =  \Sum_{i<k} \Lambda_{ik}X^{ijk} +  \Sum_{i<k} \Lambda_{ki}X^{kji}.
$$
Now, as $\Lambda$ is symmetric and $X^{kji}=-X^{ijk}$, 
$$
\Sum_{k=1}^n v_{jk} =  \Sum_{i<k} \Lambda_{ik}X^{ijk} -  \Sum_{i<k} \Lambda_{ik}X^{ijk} = 0.
$$
This proves \pref{dependence:FixJ}.

Fix any $1\le k\le n$.  Then 
$\Sum_{j=1}^n \Lambda_{jk}v_{jk} = \Sum_{i,j} \Lambda_{jk}\Lambda_{ik}X^{ijk}$.
Similarly to before, we break into two sub-sums, reindex, and use that $X^{jik} = -X^{ijk}$,
$$\Sum_{j=1}^n \Lambda_{jk}v_{jk} = \Sum_{i<j} \Lambda_{jk}\Lambda_{ik}X^{ijk} +  \Sum_{j<i} \Lambda_{jk}\Lambda_{ik}X^{ijk} = \Sum_{i<j} \Lambda_{jk}\Lambda_{ik}X^{ijk} +  \Sum_{i<j} \Lambda_{ik}\Lambda_{jk}X^{jik} =0,$$
completing the proof.
\end{proof}

We are finally ready to prove Theorem~\ref{thm: M rank}.

\begin{reptheorem}{thm: M rank}
Let $n\ge 6$, $\Lambda$ be an $n\times n$ linking matrix, and $\M(\Lambda)$ be the resulting total Milnor quotient.  Then $\rank(\M(\Lambda))\ge\frac{n^3-9n^2+20n-6}{6}$. 
\end{reptheorem}

\begin{proof}

Let $n\ge 6$ and $\Lambda$ be an $n\times n$ linking matrix.  The proof will consist of two cases. In the first we assume that for every $k$, there exists some $j_k\neq k$ for which $\Lambda_{j_kk}\neq 0$.  

As a consequence of conclusion \pref{dependence:FixK} of Lemma \ref{lem: dependent V}, we see that we may realize a nonzero multiple of $v_{j_kk}$ as a linear combination of other generators of $V$,  $-\Lambda_{j_kk}v_{j_k,k} = \Sum_{j\neq j_k} \Lambda_{jk}v_{jk}$.  There is at most one value, call it $J$ if it exists, such that $j_k=J$ for all $k\neq J$.  Thus, for all $j\neq J$ there exists some $k$ with $j_k\neq j$.  We shall call this choice $k_j$.  Solving \pref{dependence:FixK} of Lemma \ref{lem: dependent V}, for $v_{jk_j}$ gives  $v_{jk_j} = -\Sum_{k \neq k_j} v_{jk}$.  Thus, the rank of $V$ is unchanged if we remove from its generating set, $\{v_{ij}~:~1\le i\neq j\le n\}$, each $v_{j_kk}$ where $k=1,\dots n$ and each $v_{jk_j}$ with $j\neq J$.    We have eliminated $2n-1$ of the original $2\cdot{n\choose 2}$ generators from $V$ without altering its rank and so $\rank(V)\le 2\cdot {n\choose 2} - 2n+1 = n^2-3n+1$.   As $\M = {\Z^{n \choose 3}}/{V}$, the rank-nullity theorem  implies 
 $$
 \rank(\M)
 ={n \choose 3} - \rank(V)
 \ge {n \choose 3} - n^2+3n-1 = \frac{n^3-9n^2+20n-6}{6},
 $$ 
 as Theorem~\ref{thm: M rank} asserts.  

It remains to deal with the case that there exists some $k$ such that $\Lambda_{ik}=0$ for all $i\neq k$.  In this case we see that for every $j\neq k$,  $v_{jk} = \Sum_i  \Lambda_{ik} X^{ijk} = 0$.  Thus, these $n-1$ generators of $V$ are all zero and so can be removed from the generating set.  For all $j$, let $k_j$ be different from $j$ and from $k$.  (This can be done as $n\ge 6> 3$).  Using conclusion \pref{dependence:FixK} of Lemma \ref{lem: dependent V} $v_{jk_j} = -\Sum_{k \neq k_j} v_{jk}$.  These $n$ generators of $V$ can be removed without changing $V$.  We have removed $2n-1$ elements from a generating set for $V$ without changing its rank. The proof now proceeds in the same manner as in the first case.   
\end{proof}

Theorem~\ref{thm: nontrivial M} is now an immediate consequence. 

\begin{reptheorem}{thm: nontrivial M}
Let $\Lambda$ be an $n\times n$ linking matrix with $n\ge 6$.  The resulting total Milnor quotient $\M(\Lambda)$ is non-trivial.  
\end{reptheorem}
\begin{proof}

  Factoring and completing the square on the lower bound on $\rank(\M)$ from Theorem~\ref{thm: M rank}, 
$$\frac{n^3-9n^2+20n-6}{6} = \frac{(n-3)\left((n-3)^2-7\right)}{6},$$ which is positive when $n>3+\sqrt{7} \approx 5.4$.  Thus, for all $n\ge 6$, $\rank(\M) > 1$, implying $\M$ is not the trivial group.
\end{proof}

\section{An example with trivial total Milnor quotient.}\label{sect: trivial}

We demonstrate that the $n\ge 6$ in Theorem~\ref{thm: nontrivial M} is sharp.  That is, we produce a $5\times 5$ linking matrix for which $\M$ is trivial. 

\begin{reptheorem}{thm: trivial M}
Let $\Lambda = \left[\begin{array}{ccccc}0&1&1&0&0\\1&0&1&1&0\\1&1&0&1&1\\0&1&1&0&1\\0&0&1&1&0
\end{array}\right]$.  The resulting total Milnor quotient $\M(\Lambda)$ is trivial.
\end{reptheorem}

\begin{proof}
Definition \ref{defn:M} says that $\M(\Lambda)$ is is the quotient of $\Z^{5\choose 3} = \Z^10$ by the subspace $V$ spanned by $\{v_{jk}:1\le j\neq k\le n\}$.  The latter set has  $2\cdot {5 \choose 2}=20$ generators.  These may be compiled into a $10\times 20$ presentation matrix, as in \cite[Section 8.4]{Rotman2010}.  One may use a computer put this matrix into Smith normal form \cite[Theorem 8.59]{Rotman2010} and see that the resulting matrix consists of $1$'s on the main diagonal and $0$'s elsewhere.  Thus, $\M$ is the trivial group.  Indeed, the choice of $\Lambda$ given in the proposition was produced by performing a computerized search and algorithmically putting a presentation matrix into Smith normal form.    

In place of  this computer driven approach, we indulge in a direct argument which can be verified without a machine.  Indeed, we shall realize each $X^{ijk}$ as linear combinations of the, $v_{jk} = \Sum_i \Lambda_{ik}X^{ijk}$.  
First, we expand out the definition of $v_{jk}$ for ten choices of $j$ and $k$.  During each expansion we use the fact that $X^{ijk}$ is preserved by even permutations of $(ijk)$ and negated by odd permutations we put each $(ijk)$ in increasing order.
\begin{equation}\label{eqn:v's}
\begin{array}{rclrcl}
v_{31}&=&X^{123},&v_{12}&=&X^{123}+X^{124},\\
v_{41}&=&X^{124}+X^{134},&v_{14}&=&-X^{124}-X^{134}+X^{145},\\
v_{15}&=&-X^{135}-X^{145},&v_{51}&=&X^{125}+X^{135},\\
v_{32}&=&-X^{123}-X^{234},&v_{24}&=&-X^{234}+X^{245},\\
v_{25}&=&-X^{235}-X^{245},&v_{45}&=&X^{345}.
\end{array}
\end{equation}
The reader will notice that if one reads these equations left-to-right, and then top-to-bottom, each equation introduces exactly one new $X^{ijk}$-term which does not appear in previous equations.  Thus, it is possible to solve for the various $X^{ijk}$.  For the sake of being explicit, we do so:
\begin{equation}\label{eqn:x's}
\begin{array}{rclrcl}
X^{123}&=&v_{31},&X^{124}&=&v_{12}-v_{31},\\
X^{134}&=&v_{41}-v_{12}+v_{31},&X^{145}&=&v_{14}+v_{41},\\
X^{135}&=&-v_{15}-v_{14}-v_{41},&X^{125}&=&v_{51}+v_{15}+v_{14}+v_{41},\\
X^{234}&=&-v_{32}-v_{31},&X^{245}&=&v_{24}-v_{32}-v_{31},\\
X^{235}&=&v_{32}+v_{31}-v_{25}-v_{24},&X^{345}&=&v_{45}.\\
\end{array}
\end{equation}
A thorough reader will now check that the equations in \pref{eqn:v's} imply those in \pref{eqn:x's}.  As each generator $X^{ijk}$ of $\Z^{n\choose 3}$ is zero in $\M(\Lambda)$, it follows that $\M(\Lambda)=0$.
\end{proof} 

\section{A computational approach to the nontriviality of the total Milnor quotient}\label{sect: computation}

Section~\ref{sect: nontrivial} provides a proof of the nontriviality of the total Milnor quotient for links of at least six components.  In this section we shall explore a different proof of the same result.  We will explain how to reduce the proof of Theorem \ref{thm: nontrivial M} to a finite number of computations, each of which can be done by a machine.  First we prove that the nonexistence of a link of six components with trivial total Milnor quotient implies the same for links of at least six components.

\begin{proposition}
If every link of six components has nontrivial trivial total Milnor quotient, thenevery link of at least six components has nontrivial total Milnor quotient.
\end{proposition}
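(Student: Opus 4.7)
The plan is to reduce directly, via Proposition~\ref{prop:sublink}, to the assumed six-component case. Given any link $L$ with $n\ge 6$ components and linking matrix $\Lambda$, I would select any $6$-component sublink $L'\subseteq L$ (for concreteness, the one obtained by deleting the last $n-6$ components); its linking matrix $\Lambda'$ is the corresponding $6\times 6$ principal submatrix of $\Lambda$. Proposition~\ref{prop:sublink} then supplies a surjection $\M(\Lambda)\onto \M(\Lambda')$, and by hypothesis the target $\M(\Lambda')$ is nontrivial. Since a surjection onto a nonzero group cannot have trivial domain, $\M(\Lambda)$ is nontrivial as well, which is exactly the desired conclusion.

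If one prefers a more inductive formulation, the same argument can be phrased as induction on $n\ge 6$. The base case $n=6$ is precisely the hypothesis. For the inductive step, one deletes a single component to obtain an $(n-1)$-component sublink, applies Proposition~\ref{prop:sublink} to get a surjection onto the Milnor quotient of that sublink, and invokes the inductive hypothesis to conclude nontriviality of the domain. Either formulation is immediate once Proposition~\ref{prop:sublink} is in hand.

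I do not anticipate any substantive obstacle, since all of the real content is already packaged into Proposition~\ref{prop:sublink}. The only minor point worth mentioning is that the hypothesis and conclusion of the proposition are phrased in terms of links, whereas $\M$ depends only on the linking matrix; since every symmetric integer matrix with zero diagonal is realized as the linking matrix of some link, one may freely pass between the link-theoretic and matrix-theoretic formulations when applying the assumption to $\Lambda'$ and when reading off the conclusion for $\Lambda$.
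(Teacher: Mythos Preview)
Your proposal is correct and essentially identical to the paper's own proof: pick a six-component sublink, apply Proposition~\ref{prop:sublink} to get a surjection $\M(\Lambda)\onto\M(\Lambda')$, and note that a surjection onto a nontrivial group forces the domain to be nontrivial. The inductive reformulation and the remark about links versus linking matrices are fine but unnecessary additions.
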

\begin{proof}
Let $L$ be a link of at least six components with linking matrix $\Lambda$.  Let $L'$ be any 6-component sublink of $L$  and $\Lambda'$ be its linking matrix.  By Proposition \ref{prop:sublink}  there is a surjection $\M(\Lambda)\onto \M(\Lambda')$.  By the assumption of the proposition, $\M(\Lambda')\neq 0$, and so it follows that $\M(\Lambda)\neq 0$.  
\end{proof}

This reduces the proof of Theorem \ref{thm: nontrivial M} to an analysis of the total Milnor quotient associated to every $6\times 6$ linking matrix.  Now, there are infinitely many $6\times 6$ linking matrices, in order to side-step this we consider $\M\otimes \Z/2$.  By the right exactness of tensor product \cite[Theorem 6.113]{Rotman2010},
$$
\M\otimes \Z/2  = \left(\frac{\Z^{n \choose 3}}{V}\right)\otimes \frac{\Z}{2} \cong \frac{(\Z/2)^{n \choose 3}}{V\otimes \Z/2}.
$$
The tensored up subspace $V\otimes \Z/2$ is generated by 
$$v_{jk}\otimes 1 = \left( \Sum_{i=1}^n \Lambda_{ik}X^{ijk}\right)\otimes 1 =  \Sum_{i=1}^n [\Lambda_{ik}](X^{ijk}\otimes 1)$$
where $[\Lambda_{ij}]\in \Z/2$ is the result of reducing $\Lambda_{ij}$ mod 2.  Thus, $v_{jk}\otimes 1$, and so $V\otimes \Z/2$ depends only on the entires of $\Lambda$ reduced mod 2.  Thus, $\M\otimes \Z/2$ depends only in the entries of $\Lambda$ reduced mod 2.  

Therefore, in order to prove that $\M(\Lambda)$ is nontrivial for every choice of $6\times 6$ linking matrix $\Lambda$, it suffices to prove that $\M(\Lambda)\otimes \Z/2$ is nontrivial for every $6\times 6$ linking matrix $\Lambda$ with whose every entry is either 0 or 1.  Recall that a linking matrix is a symmetric matrix whose diagonal entries are zero.  There are only $2^{6 \choose 2} = 2^{15}$ such matrices.  Thus, if we can prove that all $2^{15}$ of these choices of $\Lambda$ result in $M(\Lambda)\otimes \Z/2$ nontrivial, we will be able to conclude that every 6-component link $L$ has nontrivial total Milnor quotient.

Definition \ref{defn:M} gives a presentation of $\M$ with ${6 \choose 3} = 20$ generators ($X^{ijk}$) and $2\cdot {6 \choose 2} = 30$ relators ($v_{jk}$).   Let $P$ be the resulting $20\times 30$ presentation matrix.  There exists a sequence of row and column moves reducing $P$ to a matrix $P'$ which is zero away form its main diagonal.  That is, we may put $P$ into \emph{Smith normal form} \cite[Theorem 8.59]{Rotman2010}.  These row and column moves preserve the presented group.  
If $P'$ has diagonal entries $d_1, d_2, d_3,\dots d_{20}$, then the group $\M$ is $\Oplus_{i=1}^{20}\Z/{d_i}$, where $\Z/0 = \Z$.  Thus, $\M\otimes \Z/2$ is given by $\Oplus_{i=1}^{20}\frac{\Z}{d_i}\otimes\frac{\Z}{2}\cong \Oplus_{i=1}^{20}\frac{\Z}{\GCD(2,d_i)}$ and the $\Z/2$-rank of $\M\otimes \Z/2$ is given by the number of $d_i$'s which are even.  A computer may now be used to loop through every $2^{6\choose 2} = 2^{15}$ symmetric $6\times 6$ matrix $\Lambda$ with zeros on the main diagonal, recover the resulting $20\times 30$ presentation matrix for $\M(\Lambda)$, put that presentation into Smith normal form and determine how many of the resulting diagonal entries are even.  The results of this census (as well as the census for $n=4$ and $n=5$) are recorded in Theorem \ref{thm: computer}.

\bibliographystyle{plain}
\bibliography{biblio}  

\end{document}